\numberwithin{equation}{section}
\newtheorem{theorem}{Theorem}
\newtheorem{lemma}[theorem]{Lemma}
\newtheorem{corollary}[theorem]{Corollary}
\newcommand{\ppart}{\mathscr{P}}
\newcommand{\abs}[1]{\left|#1\right|}
\theoremstyle{remark}
\newtheorem*{remark}{Remark}
\newtheorem*{remarks}{Remarks}
\theoremstyle{definition}
\newtheorem*{definitionno}{Definition}
\numberwithin{theorem}{section} \numberwithin{equation}{section}
\newcommand{\pr}{\text {\rm pr}}
\newcommand{\R}{\mathbb{R}}
\newcommand{\C}{\mathbb{C}}
\newcommand{\Q}{\mathbb{Q}}
\newcommand{\Z}{\mathbb{Z}}
\newcommand{\N}{\mathbb{N}}
\newcommand{\SL}{{\text {\rm SL}}}
\newcommand{\sgn}{\operatorname{sgn}}
\renewcommand{\SS}{\mathbb{S}}
\newcommand{\re}{\textnormal{Re}}
\newcommand{\im}{\textnormal{Im}}
\def\H{\mathbb{H}}
\newcommand{\wt}{\kappa}
\newcommand{\HH}{\mathbb{H}}
\begin{document}
\title[Regularized inner products of meromorphic modular forms and higher Green's functions]{Regularized inner products of meromorphic modular forms and higher Green's functions}
\author{Kathrin Bringmann}
\address{\rm Mathematical Institute, University of Cologne, Weyertal 86-90, 50931 Cologne, Germany}
\email{kbringma@math.uni-koeln.de}
\author{Ben Kane}
\address{\rm Mathematics Department, University of Hong Kong, Pokfulam, Hong Kong}
\email{bkane@maths.hku.hk}
\author{Anna-Maria von Pippich}
\address{\rm Fachbereich Mathematik, Technische Universit\"at Darmstadt, Schlo{\upshape{\ss}}gartenstr. 7, 64289 Darmstadt, Germany }
\email{pippich@mathematik.tu-darmstadt.de}
\date{\today}
\subjclass[2010] {11F37, 11F11}

\keywords{CM-values, harmonic Maass forms, higher Green's functions, meromorphic modular forms, polar harmonic Maass forms, regularized Petersson inner products, theta lifts, weakly holomorphic modular forms}

\medskip
\begin{abstract}
In this paper we study generalizations of Poincar\'e series arising from quadratic forms, which naturally occur as outputs of theta lifts. Integrating against them yields evaluations of higher Green's functions. For this we require a new regularized inner product, which is of independent interest.
\end{abstract}

\thanks{ The research of the first author is supported by the Alfried Krupp Prize for Young University Teachers of the Krupp foundation and the research leading to these results receives funding from the European Research Council under the European Union's Seventh Framework Programme (FP/2007-2013) / ERC Grant agreement n. 335220 - AQSER.  The research of the second author was supported by grants from the Research Grants Council of the Hong Kong SAR, China (project numbers HKU 27300314, 17302515, and 17316416).}
\maketitle

\section{Introduction and statement of results}\label{sec:intro}
While investigating the Doi-Naganuma lift, Zagier \cite{ZagierRQ} encountered interesting weight $2k$ cusp forms for $\SL_2(\Z)$ ($k\in \N_{\geq 2}$, $\ell\in\Z$) for $\ell = \delta >0$ defined by
\begin{equation}
\label{fkd}
f_{k,\ell}:=\sum_{\mathcal{A} \in \mathcal{Q}_{\ell}/\SL_2(\Z)} f_{k,\ell,\mathcal{A}}.
\end{equation}
Here $\mathcal{Q}_{\ell}$ is the set of integral binary quadratic forms of discriminant $\ell \in \Z$ and for  $\mathcal{A}$ an $\SL_2(\Z)$-equivalence class of quadratic forms of discriminant $\ell$, we set ($z\in\mathbb{H}$) 
\begin{equation*}
f_{\mathcal{A}}(z)=f_{k,\ell, \mathcal{A}}(z):= |\ell|^{\frac{k}{2}} \sum_{Q\in \mathcal{A}} Q(z,1)^{-k}.
\end{equation*}
Throughout we write $\delta>0$ for positive discriminants and let $-D<0$ denote negative discriminants.
Kohnen and Zagier \cite{KohnenZagierRational} showed, using a different normalization, that the even periods of $f_{k,\delta}$  are rational, and Kramer \cite{Kramer} proved that the $f_{k,\delta, \mathcal{A}}$ span the space of weight $2k$ cusp forms. Furthermore,  Kohnen and Zagier \cite{KohnenZagier} used the functions $f_{k,\delta}$ to construct a kernel function for the Shimura and Shintani lifts.  These may also be realized as theta lifts.

Roughly speaking, a theta lift is a map between modular objects in different spaces.  One begins with a theta kernel $\Theta(z,\tau)$,  which is modular in both variables. In our setting, both variables lie in $\H$ and $\Theta(z,\tau)$ has integral weight in $z$ and a half-integral weight in $\tau$.  Given a function $\tau\mapsto P(\tau)$ transforming with the same weight as $\Theta$ in the $\tau$-variable, one may then define the \begin{it}theta lift\end{it} of $P$ by taking the Petersson inner product $\langle \cdot,\cdot\rangle$ between $\Theta$ and $P$:
\[
\Phi(\Theta;P)(z):=\left<P,\Theta(z,\cdot)\right>.
\]

Niwa \cite{Niwa} wrote the Shimura and Shintani lifts as theta lifts by using a theta kernel corresponding to an integral quadratic form of signature $(2,1)$, which was later extended by Oda \cite{Oda} to signature $(2,n)$ for $n\in \mathbb{N}$. These lifts fit into the general framework of the theta correspondence between automorphic forms associated to two groups of a dual reductive pair \cite{Howe}.  Theta lifts have appeared in a variety of applications, including a relation of Katok and Sarnak \cite{KatokSarnak} between central values of $L$-functions and Fourier coefficients.  Paralleling the results in \cite{KatokSarnak},  the realization of $f_{k,\delta}$ as theta lifts gave the non-negativity of twisted central $L$-values \cite{KohnenZagier}.

Natural inputs for theta lifts  are Poincar\'e series.  These are defined, in the simplest case, for a translation-invariant function $\varphi$ (in the case of absolute convergence) as
\begin{equation*} 
\sum_{\gamma \in  \Gamma_{\infty}  \backslash \SL_2(\Z)} \varphi|_{\kappa}\gamma (\tau),
\end{equation*}
where $\Gamma_{\infty}:= \{\pm \left(\begin{smallmatrix}1&n\\0&1 \end{smallmatrix} \right)   : n \in \mathbb{Z}  \}$, $\kappa\in\frac{1}{2}\Z$ (throughout the paper we use $\kappa$ for arbitrary weight in $\frac12\Z$ and reserve $k$ for restricted weights), and $|_{\kappa}$ denotes the usual slash operator.  A natural choice for $\varphi$ is a term from the Fourier expansions of forms  in the space of automorphic forms in which one is interested. In this paper, we consider in particular half-integral weight modular forms and \begin{it}harmonic Maass forms\end{it}, which transform and grow like modular forms but instead of being meromorphic they are annihilated by the weight $\kappa$
{\it hyperbolic Laplace operator} (in the variable $z=x+iy\in\H$), defined by
\begin{equation}\label{Laplace}
\Delta_{\kappa} := -y^2\left(\frac{\partial^2}{\partial x^2}+\frac{\partial^2}{\partial y^2}\right)+i\kappa y\left(\frac{\partial}{\partial x}+i\frac{\partial}{\partial y}\right).
\end{equation}
We denote the Poincar\'e series constructed by choosing a particular function $\varphi$ from the Fourier expansions of these forms by $P_{k+\frac12,m}$ and $\mathcal{P}_{\frac32-k,m}$ (see \eqref{eqn:gDdef} and \eqref{eqn:PnD3/2-kdef} for the explicit definitions). This gives in particular four relevant cases: for positive weight one can average a cusp form coefficient or a coefficient that grows towards $i\infty$, while in negative weight one can define two kinds of Poincar\'e series, one that grows in the holomorphic part and one that grows in the non-holomorphic part (see \eqref{split} for the decomposition).

 We start with the case of positive weight and define (with $\tau=u+iv\in\H$) the theta kernel 
 \begin{equation}\label{eqn:thetadef1}
\Theta_k(z,\tau):=y^{-2k}v^{\frac{1}{2}}\sum_{D\in \Z}\sum_{Q\in\mathcal{Q}_D}Q(z,1)^k e^{-4\pi Q_{z}^2 v}e^{2\pi i D\tau}.
\end{equation}
Here, for $Q=[a,b,c]$, we set
\begin{equation}\label{eqn:Qzdef}
Q_{z}:=y^{-1}\left(a|z|^2+bx+c\right).
\end{equation}
 It is well-known that the function $z\mapsto \Theta_k(-\overline{z},\tau)$ is modular of weight $2k$ and $\tau\mapsto \Theta_{k}(z,\tau)$ is modular of weight $k+\frac12$. Hence taking the inner product in either variable yields a lift between integral and half-integral weights. For this, we define the following theta lift
\begin{equation*}
\Phi_k(f)(z):=\Phi(\Theta_k;f)(z).
\end{equation*}
Using as input positive weight cuspidal Poincar\'e series, one recovers the functions $f_{k,\delta}$:
\begin{equation*}
f_{k,\delta}=C_{k,\delta}\cdot\Phi_{k}\left(P_{k+\frac12,\delta}\right)
\end{equation*}
 with $C_{k,\delta}$ an explicit constant. By the Petersson coefficient formula, the holomorphic projection (recalled below) of the theta kernel $\Theta_k$ yields the generating function 
\[
\Omega_k(z,\tau):=\sum_{\delta> 0} \delta^{\frac{k-1}{2}}f_{k,\delta}(z)  e^{2\pi i \delta\tau}.
\]
Kohnen and Zagier \cite{KohnenZagier} proved that $z\mapsto\Omega_k(z,\tau)$ is a weight $2k$ cusp form and $\tau\mapsto\Omega_k(z,\tau)$ is a weight $k+\frac12$ cusp form.  By integrating in either variable, $\Omega_k$ yields theta lifts from weight $2k$ to $k+\frac12$ and from weight $k+\frac12$ to $2k$; these lifts turn out to yield an alternative construction of the well-known (first) Shintani \cite{Shintani} and Shimura \cite{Shimura} lifts.
Hereby, the idea underlying the holomorphic projection operator is simple. Suppose that $f$ is a weight $\kappa$ real-analytic modular form with moderate growth at cusps. Then $g\mapsto \langle g,f\rangle$ yields 
a linear functional on the space of weight $\kappa$ cusp forms.  Since the Petersson inner product is non-degenerate, this functional must be given by $\langle \cdot, F \rangle$ for some weight $\kappa$ cusp form $F$. This $F$ is essentially the holomorphic projection of $f$.

If one takes weakly holomorphic Poincar\'e series (i.e., Poincar\'e series which yield meromorphic modular forms with poles only at the cusps) as inputs of the theta lifts, one obtains, instead of the $f_{k,\delta}$'s, the analogous meromorphic modular forms $f_{k,-D}$ defined in \eqref{fkd}.  We note that some care is needed if the inputs are no longer cusp forms.  Although the naive definition of the inner product usually diverges when taking weakly holomorphic modular forms one can extend its definition,  and define a regularized theta lift that is meaningful for more general inputs; 
we describe this in Section \ref{sec:regularization}. To obtain the functions $f_{k,-D}$ as theta lifts, we use a regularization of Borcherds. The functions $f_{k,-D}$ were first constructed by  Bengoechea \cite{Bengoechea} in her thesis.
 \begin{theorem}\label{thm:liftfkd}
We have
\begin{equation*}
\Phi_k\left(P_{k+\frac{1}{2},-D}\right)=f_{k,-D}.
\end{equation*}
\end{theorem}
\begin{remarks}
\noindent

\noindent
\begin{enumerate}[leftmargin=*]
\item
The theta lift in Theorem \ref{thm:liftfkd} is a special case of a more general theta lift introduced by Borcherds in Theorem 14.3 of \cite{Bo1}.  We choose a Poincar\'e series as a distinguished input, while Borcherds had  more general inputs. Moreover, Borcherds unfolded against the theta function, while we apply the unfolding method to the Poincar\'e series.  As a result, the two approaches yield different representations of the functions $f_{k,-D}$.
\item
The theta lift $\Phi_k$ maps (parabolic) Poincar\'e series $P_{k+\frac12,\ell}$ to other types of Poincar\'e series; the functions $f_{k,\delta}$ are sums of the hyperbolic Poincar\'e series which appeared in previous work of Petersson \cite{Pe3} (see also \cite{ImOs}), while we see in \eqref{eqn:fkDPsi} that the $f_{k,-D}$ are sums of the elliptic Poincar\'e series defined by Petersson in \cite{Pe1}. This implies that they are elements of $\SS_{2k}$, the space of {\it  meromorphic cusp forms of weight $2k$} for $\SL_2(\Z)$, which are meromorphic modular forms that decay like cusp forms towards $i\infty$.
\end{enumerate}
\end{remarks}

We turn now to the case of negative weight. We use the theta kernel ($k\in \N_{\geq 2}$) 
 \begin{equation*}
\Theta^*_{1-k} (z,\tau):=v^k \sum_{D\in \Z}\sum_{Q\in \mathcal{Q}_D} Q_{z}Q(z,1)^{k-1} e^{-\frac{4\pi |Q(z,1)|^2 v}{y^2}}  e^{-2\pi i D\tau}.
\end{equation*}
\noindent
The function $z\mapsto \Theta_{1-k}^*(z,\tau)$ is modular of weight $2-2k$, and $\tau\mapsto \Theta^*_{1-k}(z,\tau)$ is modular of weight $\frac32-k$.  We set
\begin{equation*}
\Phi_{1-k}^*(f)(z):= \Phi(\Theta_{1-k}^*;f)\!\left(-\overline{z}\right).
\end{equation*}
 We then define negative-weight analogues of the functions $f_{k,\ell}$ (with $\ell\in\Z$), namely 
\begin{equation*}
\mathcal{F}_{1-k,\ell}:=\sum_{\mathcal{A} \in \mathcal{Q}_{\ell}/\SL_2(\Z)}  \mathcal{F}_{1-k,\ell,\mathcal{A}},
\end{equation*}
where
\begin{equation}\label{eqn:Gdef}
\mathcal{F}_\mathcal{A}(z)=\mathcal{F}_{1-k,\ell,\mathcal{A}}(z):=\sum_{Q\in \mathcal{A} }\mathbb{P}_{1-k,\ell,Q}(z)
\end{equation}
with
\begin{equation}\label{defineP}
\mathbb{P}_{1-k,\ell,Q}(z):=\frac{i(-1)^k}{2}|\ell|^{\frac{1-k}{2}}\sgn\left(Q_{z}\right) Q\left(z,1\right)^{k-1} \beta\left(\frac{ \ell y^2}{\left|Q\left(z,1\right)\right|^2_{\phantom{-}}};k-\frac{1}{2},\frac{1}{2}\right).
\end{equation}
Here $\beta\left(Z;a,b\right)$ denotes the {\it incomplete $\beta$-function}, which is defined for $a,b\in \C$ satisfying $\re(a)$, $\re(b)>0$ by $\beta\left(Z;a,b\right):=\int_{0}^Z t^{a-1}\left(1-t\right)^{b-1}dt$. Note that we can also write the incomplete $\beta$-function in terms of the hypergeometric function $_2F_1$ (see \eqref{equ-beta-hypergeom}). 

We recall some of the properties of these functions for $\ell=\delta>0$.  The $\mathcal{F}_{1-k,\delta,\mathcal{A}}$, with a different normalization, were investigated by Kohnen and the first two authors in \cite{BKW}, and a variant of these functions was studied by H\"ovel \cite{Hoevel} for $k=1$.
It turns out that they are locally harmonic Maass forms.  Locally harmonic Maass forms allow jump  singularities in the upper half-plane. These functions and their higher-dimensional analogues also appeared as theta lifts in both physics and mathematics -- see the work of Angelantonj, Florakis, and Pioline \cite{PiolineOneLoop} for the former and the work of Viazovska and the first two authors \cite{BKM} for the latter. Namely, in analogy to the positive weight case, we have \cite{BKM, Hoevel}
$$
\mathcal{F}_{1-k,\delta}=C_{1-k,\delta}\cdot \Phi_{1-k}\left(\mathcal{P}_{\frac32 -k,\delta}\right),
$$
with $C_{1-k,\delta}$ an explicit constant and $\mathcal{P}_{3/2 -k,\delta}$ defined in \eqref{eqn:PnD3/2-kdef}.  In addition to their relationship via theta lifts, the functions $\mathcal{F}_{\mathcal A}$ are connected to the functions $f_{\mathcal A}$ via the differential operators $\xi_{2-2k}$ and $\mathcal{D}^{2k-1}$, where
\begin{equation}\label{XiD}
\xi_{\kappa}:=2iy^{\kappa}\overline{\frac{\partial}{\partial \overline{z}}}\quad\textnormal{ and }\quad\mathcal{D}:=\frac{1}{2\pi i } \frac{\partial}{\partial z}.
\end{equation}
Specifically, we have
\begin{align*}
\xi_{2-2k}\left(\mathcal{F}_{\mathcal{A}}\right) =
\mathcal{C}_{1,k,\delta}\cdot f_{\mathcal{A}},
\quad\quad
\mathcal{D}^{2k-1}\left(\mathcal{F}_{\mathcal{A}}\right)=
\mathcal{C}_{2,k,\delta}\cdot f_{\mathcal{A}},
\end{align*}
where the $\mathcal{C}_{j,k,\delta}$ are explicit constants.

It is unusual for a harmonic Maass form to map to a constant multiple of the same function under $\xi_{2-2k}$ and $\mathcal{D}^{2k-1}$.
However, given their uniform definition in \eqref{eqn:Gdef}, it is not a surprise to find out that for discriminants $-D<0$, the functions $\mathcal{F}_{1-k,-D,\mathcal{A}}$ have many properties similar to those of $\mathcal{F}_{1-k,\delta,\mathcal{A}}$.  As a difference between negative and positive discriminants, note that for negative discriminants the functions have poles instead of jump singularities.  We call functions that behave like harmonic Maass forms away from singularities of this type \begin{it}polar harmonic Maass forms\end{it}.
\begin{theorem}\label{thm:Gpolar}
\noindent

\noindent
\begin{enumerate}[leftmargin=*,label={\rm(\arabic*)}]
		\item We have
		\[
		\Phi_{1-k}^* \left( \mathcal{P}_{\frac{3}{2}-k,-D}\right) =\mathcal{F}_{1-k,-D}.
		\]
		\item For $\mathcal{A}\in\mathcal{Q}_{-D}\slash\SL_2(\Z)$, the functions $\mathcal{F}_{\mathcal{A}}$ are weight $2-2k$ polar harmonic Maass forms whose only singularities occur at $\tau_Q$ for $Q\in \mathcal{A}$; here $\tau_Q\in\mathbb H$ is the unique solution to $Q(z,1)=0$.  Furthermore, we have
		\begin{align}\label{eqn:xiDG}
		\xi_{2-2k}\left(\mathcal{F}_{\mathcal{A}}\right)& = f_{\mathcal{A}},& \mathcal{D}^{2k-1}\left(\mathcal{F}_{\mathcal{A}}\right)&= -\frac{(2k-2)!}{(4\pi)^{2k-1}} f_{\mathcal{A}}.
		\end{align}
	\end{enumerate}
\end{theorem}
\begin{remarks}
\noindent

\noindent
\begin{enumerate}[leftmargin=*]
\item
The difference in the singularities of $\mathcal{F}_{1-k,\ell,\mathcal{A}}$ for discriminants $\ell>0$ and $\ell<0$ comes from the sign factor in \eqref{defineP}.
For $\ell>0$, $Q_z=0$ along a geodesic $S_Q$, and the function ``jumps'' as one crosses from one side of $S_Q$ to the other.  For $\ell<0$, $\sgn(Q_z)\neq 0$ and $\sgn(Q_z)$ is independent of $z$; namely, $\sgn(Q_z)=1$ for all $z\in\H$ if $Q$ is positive-definite and $\sgn(Q_z)=-1$ for all $z\in\H$ if $Q$ is negative-definite.
\item
A key step in proving Theorem \ref{thm:Gpolar} (2) is to relate $\mathcal F_\mathcal{A}$ to the higher Green's functions $G_k$ defined in Subsection \ref{sec:Greens} (see Corollary \ref{diffop}).  These have appeared in a number of interesting applications, and their evaluations at pairs of CM-points has been intensively studied. Values of higher Green's functions at CM-points are conjectured to be roughly logarithms of algebraic numbers, and a number of cases are known by work of Mellit \cite{Mellit} and Viazovska \cite{Vi}.
\end{enumerate}
\end{remarks}

Let us now return to the positive weight cusp forms $f_{k,\delta,\mathcal{A}}$. Integrating against them gives cycle integrals. To be more precise, we have
\[
\left<f,f_{k,\delta,\mathcal{A}}\right>=\mathcal{C}_{k,\delta}\int_{\Gamma_{Q_0}\,\backslash \, S_{Q_0}}f(z)Q_0(z,1)^{k-1}dz,
\]
where  $\mathcal C_{k,\delta}\in\R$ is an explicit constant, $Q_0 \in \mathcal{A}$ is arbitrary, $S_{Q_0}$ is an oriented geodesic joining the two real roots of $Q_0$, and $\Gamma_{Q_0}\subset \SL_2(\R)$ is the stabilizer group of $Q_0$. These cycle integrals then occur as coefficients of the (first) Shintani lift. In this paper we take the Petersson inner product of $f_{\mathcal A}$ with other meromorphic cusp forms.

Since the classical inner product diverges, one needs to regularize it.  In addition to their inherent interest, extensions of Petersson's inner product yield applications to other areas, including generalized Kac--Moody algebras \cite{GritsenkoNikulin} and the arithmetic of Shimura varieties \cite{BrY}.  Those applications used a regularization of Petersson \cite{Pe2}, later rediscovered and generalized by Borcherds \cite{Bo1} and Harvey--Moore \cite{HM}.  However, Petersson's inner product $\langle f,f \rangle$ for any (non-cuspidal) $f\in \SS_{2k}$ always diverges (see Satz 1 of \cite{Pe2}), so one cannot use it to extend the classical inner product to an inner product on any larger subspace.  For the application in this paper, we introduce a new regularized inner product, again denoted by $\left<\cdot,\cdot\right>$ and formally defined in \eqref{eqn:OurReg} below, which extends the domain of the inner product to include all meromorphic cusp forms.

\begin{theorem}\label{thm:innerconverge}
	The regularized inner product $\left<f,g\right>$ exists for all $f,g\in\SS_{2k}$. It is Hermitian, and it equals Petersson's regularized inner product whenever his exists.
\end{theorem}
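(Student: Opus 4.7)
The plan is to reduce the three claims to a local analysis near each of the finitely many inequivalent interior poles $\varrho_1,\dots,\varrho_N$ of $f\overline g$ in a fundamental domain $\mathcal F\subset\H$, using the elliptic expansion \eqref{eqn:fEllExp}. Since $f$ and $g$ decay like cusp forms towards $i\infty$, the integrand $f(z)\overline{g(z)}y^{2k-2}$ is rapidly decreasing at the cusp, so divergence of the naive Petersson integral can only come from the $\varrho_j$. The strategy is to show that, near each such pole, the obstruction to convergence is an explicit Laurent polynomial in the cutoff radius whose coefficients are polynomial expressions in finitely many diagonal elliptic coefficient products $a_f(\varrho;m)\overline{a_g(\varrho;m)}$, and that the definition \eqref{eqn:OurReg} subtracts precisely these expressions.

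For the existence statement, fix a pole $\varrho$ with $\eta:=\im(\varrho)$ and change variables by $w=X_\varrho(z)$. A direct computation using $z-\overline\varrho=2i\eta/(1-w)$ and $y=\eta(1-|w|^2)/|1-w|^2$ yields
$$
\bigl|z-\overline\varrho\bigr|^{-4k}y^{2k-2}\,dx\,dy \;=\; c_k\,\eta^{-2k}(1-|w|^2)^{2k-2}\,du\,dv
$$
for an explicit constant $c_k$. Plugging in the product of the elliptic expansions of $f$ and $\overline g$ and passing to polar coordinates $w=re^{i\theta}$, the angular integral $\int_0^{2\pi}e^{i(m-n)\theta}d\theta$ kills all $m\ne n$ terms, leaving a finite sum of radial integrals $\int_0^\varepsilon(1-r^2)^{2k-2}r^{2m+1}dr$ with $m<0$, weighted by $a_f(\varrho;m)\overline{a_g(\varrho;m)}$. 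Expanding $(1-r^2)^{2k-2}$ as a polynomial in $r^2$, each such radial integral is a Laurent polynomial in $\varepsilon$ (with a $\log\varepsilon$ term exactly when the resulting exponent hits $-1$) plus an absolutely convergent piece. The divergent part is what \eqref{eqn:OurReg} subtracts, so the $\varepsilon\to 0^+$ limit exists and depends only on finitely many elliptic coefficients at each pole.

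The Hermitian property follows formally: swapping $(f,g)$ simultaneously complex-conjugates the integrand $f\overline g y^{2k-2}$ and the diagonal products $a_f(\varrho;m)\overline{a_g(\varrho;m)}$, so both the truncated integral and every subtracted divergent term get conjugated, giving $\langle f,g\rangle=\overline{\langle g,f\rangle}$. For the comparison with Petersson's regularization, his Satz 1 in \cite{Pe2} requires precisely that every divergent diagonal coefficient $a_f(\varrho;m)\overline{a_g(\varrho;m)}$ with $m<0$ vanish at every pole; in that case our subtracted counter-terms are identically zero, so the two regularizations coincide.

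The main obstacle I anticipate is the bookkeeping in the local analysis: the factor $(1-|w|^2)^{2k-2}$ is a polynomial of degree $2k-2$ in $|w|^2$, so its interaction with the negative-index terms $w^m\overline w{}^m$ for various $m<0$ produces a two-dimensional array of contributions that must be reorganized into a single Laurent polynomial in $\varepsilon$ matching the prescription in \eqref{eqn:OurReg}. Verifying this match cleanly, and checking that the result is independent of the shape of the cut-out neighborhoods (not merely of the radial cutoff $\varepsilon$), is the technical heart of the argument.
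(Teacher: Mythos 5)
There is a genuine gap, and it is conceptual rather than computational: your proposal never engages with what the regularization \eqref{eqn:OurReg} actually is. The inner product is \emph{defined} as $\operatorname{CT}_{s=0}$ of $\int_{\SL_2(\Z)\backslash\H} f(z)H_s(z)\overline{g(z)}y^{2k-2}\,dx\,dy$, where $H_s(z)=\prod_\ell r_{z_\ell}^{2s_\ell}(Mz)$ damps the singularities so that the integral converges for $\re(s_\ell)\gg0$. Proving existence therefore means proving that this function of $s=(s_1,\dots,s_r)$ admits a meromorphic continuation to a neighborhood of $s=0$, so that its constant term is defined. You instead describe a cutoff-and-minimal-subtraction scheme (``the definition \eqref{eqn:OurReg} subtracts precisely these expressions'', ``the $\varepsilon\to0^+$ limit exists'') and never introduce the variables $s_\ell$ or the factors $r_{z_\ell}^{2s_\ell}$ at all. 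A renormalized cutoff limit is a different object; even if its finite part happens to agree with the constant term at $s=0$, that agreement is something to be proved, and without the $s$-continuation you have not established that the quantity the theorem refers to exists. Relatedly, your anticipated ``technical heart'' --- independence of the shape of the cut-out neighborhoods --- is moot: there are no cut-out neighborhoods in \eqref{eqn:OurReg}, only a choice of fundamental domain.

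The good news is that your local analysis is exactly the right computation and becomes the paper's proof once the damping factor is reinstated. Keeping $h_{s_\ell,z_\ell}(z)=R^{2s_\ell}$ (with $R=r_{z_\ell}(z)$) inside your polar-coordinate integral, the angular orthogonality and the binomial expansion of $(1-R^2)^{2k-2}$ give radial integrals
\begin{equation*}
\int_0^\delta R^{1+2(n+j)+2s_\ell}\,dR=\frac{\delta^{2(n+j+1+s_\ell)}}{2\left(n+j+1+s_\ell\right)},
\end{equation*}
which are visibly meromorphic in $s_\ell$ with at worst a simple pole at $s_\ell=0$ (when $n+j+1=0$); this, plus the observation that away from small balls around the poles one may set $s=0$ directly, is the whole existence proof. (One also needs the small bookkeeping with the stabilizers $\Gamma_{z_\ell}$ via \eqref{eqn:Ballsplit}, which you omit.) For the Hermitian property the honest argument in this framework uses $\overline{H_{\overline{s}}(z)}=H_s(z)$ and $\operatorname{CT}_{s=0}\overline{F(\overline{s})}=\overline{\operatorname{CT}_{s=0}F(s)}$, not conjugation of subtracted counter-terms. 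Your treatment of the comparison with Petersson via the criterion \eqref{eqn:regexist} is correct in substance: under that hypothesis the offending diagonal terms vanish, one may set $s_\ell=0$, and both regularizations reduce to the same convergent integral.
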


\indent
We next consider an application of the inner product to higher Green's functions.  To state the formula we let $\omega_{\varrho}$ be the size of the stabilizer $\Gamma_{\varrho}$ of $\varrho\in\H$ with respect to the action of $\operatorname{PSL}_{2}(\Z)$.  
We require the \begin{it}elliptic expansion\end{it} of a meromorphic modular form $f$ around $\varrho\in\H$, namely
\begin{equation}\label{eqn:fEllExp}
 f(z)=\left(z-\overline{\varrho}\right)^{-2k}\sum_{n\gg -\infty} c_{f,\varrho}(n) X_{\varrho}(z)^n, \qquad\text{with}
 \qquad X_{\varrho}(z):=\frac{z-\varrho}{z-\overline{\varrho}}.
\end{equation}
Furthermore, set 
\begin{equation}\label{eqn:bkndef}
b_{k,n}:= \frac{(-1)^k (2k-2)!}{2^{3k-2}(k-1)!}
\begin{cases} \frac{1}{n!}& \text{if }n\geq k-1,\\
\frac{1}{(2k-2-n)!} &\text{if }n<k-1.
\end{cases}
\end{equation}
Throughout, we let
\begin{equation}\label{raising}
R_\kappa:=2i\frac{\partial }{\partial z} +\frac{\kappa}{y}
\end{equation}
be the {\it Maass raising operator}, and denote repeated raising by $R_{\kappa}^{n}:=R_{\kappa+2n-2}\circ\cdots\circ R_{\kappa}$.
\begin{theorem}\label{generalint}
If $Q_0\in \mathcal{A}\in\mathcal{Q}_{-D}/\SL_2(\Z)$ and $f\in \mathbb{S}_{2k}$ with poles in the $\SL_2(\Z)$-orbits of $\mathfrak{z}_1,\dots,\mathfrak{z}_r$ with $\mathfrak{z}_{\ell}=\mathbbm x_\ell+i\mathbbm y_\ell \neq \tau_Q$ for all $Q\in\mathcal{A}$ and $\ell\in \{1,\dots,r\}$, then 
\begin{multline*}
\left<f,f_{\mathcal{A}}\right>=\frac{\pi}{\omega_{\tau_{Q_0}}} \sum_{\ell=1}^r \frac1{\omega_{\mathfrak{z}_\ell}}
\Bigg(\sum_{n\geq k} b_{k,n-1}\mathbbm{y}_\ell^{-2k+n} c_{f,\mathfrak{z}_{\ell}}(-n)  R_{0}^{n-k}\left(G_k(z,\tau_{Q_0})\right)\\
+ \sum_{n=1}^{k-1} b_{k,n-1} \mathbbm{y}_\ell^{-n}  c_{f,\mathfrak{z}_{\ell}}(-n)  \overline{R_{0}^{k-n}\left(G_k(z,\tau_{Q_0})\right)}\Bigg).
\end{multline*}
\end{theorem}
Particularly interesting is the following special case.
\begin{corollary} \label{cor:Greensinner}
For every $Q_j\in \mathcal{A}_j\in \mathcal{Q}_{-D_j}\slash\SL_2(\Z)\  (j=1,2)$ with $\mathcal{A}_{1}\neq \mathcal{A}_2$ we have
	$$
	\left<f_{\mathcal{A}_1},f_{\mathcal{A}_2}\right>=
\pi\,b_{k,k-1} \frac{G_k\!\left(\tau_{Q_1},\tau_{Q_2}\right)}{\omega_{\tau_{Q_1}}\omega_{\tau_{Q_2}}}.
	$$
\end{corollary}

\begin{remarks}
	\noindent
\begin{enumerate}[leftmargin=*]
\item 
For arbitrary $z_1,z_2\in\H$, which are not necessarily CM-points, one may also realize $G_k(z_1,z_2)$ as an inner product. In order to obtain such a relation, one replaces $f_{\mathcal{A}_j}$ with the more general functions $\Psi_{2k,-k}(\cdot,z_j)$, defined in \eqref{eqn:Psidef} below, which have poles at $z_1,z_2\in\H$. Furthermore,  since Theorems \ref{thm:innerconverge} and \ref{generalint}  can be generalized to arbitrary congruence subgroups, similar relations can be established for the corresponding Green's functions associated to these subgroups. 
\item
Given the interest in $G_{k}$ evaluated at CM-points, one may wonder what further implications Corollary \ref{cor:Greensinner} may have. Possible future directions of study along these lines are discussed in Section \ref{sec:future} below.  The relation between higher Green's functions and inner products in Corollary \ref{cor:Greensinner} leads one to search for connections with geometry. In the case $k=1$, which is excluded here, Gross and Zagier related the Green's function evaluated at CM-points to the height pairing of certain Heegner points on modular curves (see Proposition 2.22 in Section II of \cite{GZ}).
 This has been generalized to higher $k$ by Zhang, who defined a global height pairing between
CM-cycles in certain Kuga--Sato varieties using arithmetic intersection theory,
as developed by Gillet and Soul\'e \cite{GilletSoule}.
The archimedean part of this height pairing is then given by the values of higher Green's functions evaluated at CM-points (see Propositions 3.4.1 and 4.1.2 of \cite{Zhang}). 
\item
Although we restrict ourselves in the introduction to the case $\mathfrak{z}_{\ell}\neq \tau_Q$ in Theorem \ref{generalint}, and correspondingly $\mathcal{A}_1\neq \mathcal{A}_2$ in Corollary \ref{cor:Greensinner}, this is only done for convenience of notation. By replacing the Green's function with a regularized version, we obtain a more general version of Theorem \ref{generalint} in Theorem \ref{thm:innerGreensGeneral} below, and consequently an extension of Corollary \ref{cor:Greensinner}.
\end{enumerate}
\end{remarks}

The paper is organized as follows. In Section \ref{sec:prelim} we recall basic geometric facts and certain special functions, and introduce the relevant modular objects. In Section \ref{sec:regularization} we study regularized inner products and prove Theorem \ref{thm:innerconverge}. In Section \ref{sec:theta} we investigate theta lifts, proving Theorem \ref{thm:liftfkd} and Theorem \ref{thm:Gpolar} (1). Theorem \ref{thm:Gpolar} (2) is established while studying the functions $\mathcal{F}_\mathcal{A}$ in  Section \ref{sec:FA}. In Section \ref{sec:residue} we compute regularized inner products in order to prove Theorem \ref{generalint} and Corollary \ref{cor:Greensinner}.  We conclude the paper with a discussion of natural questions in Section \ref{sec:future}.

\section*{Acknowledgements}

\noindent The authors thank Stephan Ehlen, Jens Funke, Pavel Guerzhoy, Steffen L\"obrich, and Tonghai Yang for helpful conversations, and  Shaul Zemel for discussions and finding an error in a previous version. Moreover they thank the anonymous referees for comments improving the exposition of the paper.

\section{Preliminaries}\label{sec:prelim}

\subsection{CM-points and the hyperbolic distance}

For a positive-definite $Q=[a,b,c]\in\mathcal{Q}_{-D}$ (with $a>0$), we denote the associated CM-point by
\begin{equation}\label{eqn:yQval}
	\tau_Q=u_Q+iv_Q, \qquad\text{with}\qquad u_Q=-\frac{b}{2a} \quad{\text{ and }}\quad v_Q=\frac{\sqrt{D}}{2a}.
\end{equation}
We note that for $z=x+iy\in\H$, with $X_{\tau_Q}$ defined in \eqref{eqn:fEllExp}, we have
\begin{equation}\label{rewriteQ}
	 Q(z,1) =\frac{\sqrt{D}}{2v_Q} \left(z-\overline{\tau_Q}\right)^2 X_{\tau_Q}(z).
\end{equation}
Moreover, for $Q\in \mathcal{Q}_{\ell}$, we often make use of the identity
\begin{equation}\label{eqn:Qrewrite}
	y^{-2}\left|Q(z,1)\right|^2 =Q_{z}^2+\ell,
\end{equation}
with $Q_z$ given in \eqref{eqn:Qzdef}. The quantity $Q_z$ naturally occurs when computing the hyperbolic distance $d(z,\mathfrak{z})$ between $z$ and $\mathfrak{z}=\mathbbm{x}+i\mathbbm{y}\in \H$, which is expressed through
\begin{equation}
\label{eqn:coshgen}
	\cosh\left(d(z,\mathfrak{z})\right) = 1+\frac{\left|z-\mathfrak{z}\right|^2}{2y\mathbbm{y}}
\end{equation}
(see p.~131 of \cite{Beardon}). In particular, when $\mathfrak{z}$ is a CM-point $\tau_Q$ with $Q\in\mathcal{Q}_{-D}$, we have the equality
\begin{equation}\label{eqn:coshQz/D}
	\cosh\left(d\left(z,\tau_{Q}\right)\right) = \frac{Q_z}{\sqrt{D}}.
\end{equation}
The combination of \eqref{eqn:Qrewrite} and \eqref{eqn:coshQz/D} gives
\begin{equation}\label{eqn:coshrat}
\left(1-\cosh(d(z,\tau_Q))^{2}\right)^{-1}=-\frac{Dy^2}{|Q(z,1)|^2}.
\end{equation}
Finally, for $z\in \H$ (and fixed $\varrho\in\H$) we set
\begin{equation*}
	r_{\varrho}(z):=\tanh\left(\frac{d(z,\varrho)}{2} \right)= \left|X_{\varrho}(z)\right|.
\end{equation*}
Here the last equality follows from the half-argument formula 
\[
\tanh\left(\frac{Z}{2}\right) = \sqrt{\frac{\cosh(Z)-1}{\cosh(Z)+1}}
\]
combined with \eqref{eqn:coshgen} and $\left|z-\mathfrak{z}\right|^2+4y\mathbbm{y}=\left|z-\overline{\mathfrak{z}}\right|^2$. Using this half-argument formula once again, equation \eqref{eqn:coshQz/D} implies that 
\begin{equation}\label{1r}
1-r_{\tau_Q}(z)^2= \frac{2}{\cosh\!\left(d\!\left(z,\tau_Q\right)\right) +1}=\frac{2\sqrt{D}}{Q_z+\sqrt{D}}.
\end{equation}
\subsection{Properties of hypergeometric functions}
In this subsection we recall relations between the hypergeometric function and other functions, as well as its transformations that are required for this paper.

For $Z\in\mathbb{C}$ with $|Z|<1$ the hypergeometric function is defined by the series
\begin{align}\label{2F1}
{}_2 F_1\left(a,b;c;Z\right):=\sum_{n\geq 0} \frac{(a)_n(b)_n}{n!(c)_n} Z^n,
\end{align}
with parameters $a,b,c\in\mathbb{C}$, $c$ not a non-negative integer, and $(a)_n:=\prod_{j=0}^{n-1}(a+j)$. Outside the disk $|Z|<1$, the hypergeometric function is defined by analytic continuation.
Using the symmetry in \eqref{2F1}, one directly sees that
\begin{equation}
{}_2 F_1\left(a,b;c;Z\right)={}_2 F_1\left(b,a;c;Z\right).
\label{2F1sym}
\end{equation}
Furthermore, by 15.4.6 of \cite{NIST} we have
\begin{equation}
{}_2 F_1\left(a,b;b;Z\right)=\left(1-Z\right)^{-a}.
\label{2F1 special}
\end{equation}
We also require the following transformation law from 15.8.1 of \cite{NIST}, valid when $1-Z\notin\R^-$:
\begin{equation}\label{2F1tr}
{}_2 F_1\left(a,b;c;Z\right)=\left(1-Z\right)^{-b}{}_2 F_1\left(c-a,b;c;\frac{Z}{Z-1}\right).
\end{equation}
By 15.5.3 of \cite{NIST} (with $n=1$) we have
 		\begin{equation}\label{diff2F1}
 		\frac{\partial}{\partial Z}\left(Z^{a}\,{_2F_1}\left(a,b;c;Z\right)\right)=aZ^{a-1}\,{_2F_1}\left(a+1,b;c;Z\right).
 		\end{equation}
By 
8.17.7 of \cite{NIST} the
 hypergeometric function is related to the incomplete $\beta$-function via
\begin{equation}\label{equ-beta-hypergeom}
\beta(Z;a,b)=\frac{Z^a}{a}\, {}_2F_1\left(a,1-b;a+1;Z\right).
\end{equation}
Using 15.8.14 of \cite{NIST} then implies that for $1-Z\not\in\R^-$ we have 
\begin{equation}\label{betatran}
	\beta(Z;2k-1,1-k)=2^{2k-2}i(-1)^k\beta\left(\frac{Z^2}{4(Z-1)}; k-\frac12, \frac{1}{2}\right).
\end{equation}

\subsection{Polar harmonic Maass forms and their elliptic expansions}

For $\gamma=\left(\begin{smallmatrix}a &b\\c &d\end{smallmatrix}\right)\in\SL_2(\Z)$ and $f\colon \H\to\C$, the weight $\wt\in\frac{1}{2}\Z$ {\it slash-action} is defined by
$$
f|_{\wt}\gamma(\tau):= (c\tau +d)^{-\kappa} f(\gamma \tau) \begin{cases}
                        1 & \text{ if } \kappa \in \Z,\\
\left(\frac{c}{d}\right) \varepsilon_d^{2\kappa} & \text{ if } \kappa \in \frac{1}{2}\Z \backslash \Z\text{ and }\gamma\in\Gamma_0(4),                       \end{cases}
$$
with the extended Legendre symbol ($\frac{\cdot}{\cdot}$) and 
\[
\varepsilon_d:=\begin{cases} 1 &\text{if }d\equiv 1\pmod{4},\\ i&\text{if }d\equiv 3\pmod{4}.\end{cases}
\]
We assume throughout that $N\in\N$ is divisible by $4$ if $\kappa\in\frac12+\Z$.

\begin{definitionno}
For $N\in\N$, a \begin{it}weight $\kappa\in \frac{1}{2}\Z$ polar harmonic Maass form on $\Gamma_0(N)$\end{it} is a real-analytic function $\mathcal{M} \colon  \H \rightarrow \C$ that satisfies the following conditions, outside finitely many singularities in $\Gamma_0(N)\backslash(\H\cup \Q \cup \{i\infty\})$:
\noindent

\noindent
\begin{enumerate}[leftmargin=*]
\item
For all $\gamma\in\Gamma_0(N)$ we have $\mathcal{M}|_{\kappa} \gamma =\mathcal{M}$.
\item
We have $\Delta_\kappa(\mathcal{M})=0$, with the hyperbolic Laplacian defined in \eqref{Laplace}.
\item
For every $\varrho\in\H$ there exists $n\in\N$ such that $(\tau-\varrho)^{n}\mathcal{M}(\tau)$ is bounded for $r_{\varrho}(\tau)\ll_{\mathcal{M}} 1$. We say that $\mathcal{M}$ has a \begin{it}singularity of finite order\end{it} at $\varrho$ if this condition is satisfied.
\item
The function $\mathcal{M}$ grows at most linear exponentially at the cusps.
\end{enumerate}
If the only singularities of $\mathcal{M}$ lie at the cusps, then $\mathcal{M}$ is a \begin{it}harmonic Maass form\end{it}.
\end{definitionno}
For $\kappa<1$, the Fourier expansion of a polar harmonic Maass form at $i\infty$ has a natural splitting of the shape
\begin{equation}
\label{split}
\mathcal{M}(\tau)=\mathcal{M}^+(\tau)+\mathcal{M}^-(\tau),
\end{equation}
where the {\it holomorphic} and {\it non-holomorphic parts} (at $i\infty$) are defined by the following series, that converge for $v$ sufficiently large, with $c_{\mathcal M}^{\pm}(n)\in\C$:
\begin{align*}
\mathcal{M}^+(\tau)&:=\sum_{n\gg -\infty} c^+_{\mathcal{M}}(n) e^{2\pi i n \tau},\\
\mathcal{M}^-(\tau)&:=c^-_{\mathcal{M}}(0) v^{1-\kappa}+\sum_{0\neq n\ll \infty} c^-_{\mathcal{M}}(n) \Gamma\left(1-\kappa,-4\pi n v\right)e^{2\pi i n\tau}.
\end{align*}
Here $\Gamma(r,Z):=\int_{Z}^{\infty} t^{r-1} e^{-t} dt$ denotes the \begin{it}incomplete gamma function\end{it}. 
Expansions of this type also exist for $\kappa\geq 1$, but the term in $\mathcal{M}^-$ containing $v^{1-\kappa}$ is replaced with a logarithmic term for $\kappa=1$, and more care is needed for the terms containing an incomplete gamma function when both parameters are negative. There are also similar expansions at the other cusps. One reason that the splitting into holomorphic and non-holomorphic parts is natural is that the holomorphic part is annihilated by the operator $\xi_{\kappa}$ defined in \eqref{XiD}.  Both parts can have singularities; the singularities in the holomorphic part are poles, while one can determine the kind of singularities in the non-holomorphic part by noting that its image under $\xi_{\kappa}$ is meromorphic.  The terms in the expansion which grow as $v\to\infty$ are called the \begin{it}principal part of $\mathcal{M}$ \end{it}(at $i\infty$); namely, for $\kappa<1$ these are the terms in $\mathcal{M}^+$ with $n<0$ and those terms in $\mathcal{M}^-$ with $n\geq 0$.   The coefficients $c_{\mathcal M}^-$ are closely related to coefficients of meromorphic modular forms of weight $2-\kappa$, following from the fact that if $\mathcal{M}$ is modular of weight $\kappa$, then $\xi_{\kappa}(\mathcal{M})$ is modular of weight $2-\kappa$. Thus $\xi_{\kappa}$ maps weight $\kappa$ polar harmonic Maass forms to weight $2-\kappa$ meromorphic modular forms.

Solving the second-order differential equation coming from \eqref{Laplace}, one obtains an elliptic expansion of polar harmonic Maass forms that parallels the expansion \eqref{eqn:fEllExp} for meromorphic cusp forms. The resulting expansion is given in Proposition 2.2 of \cite{BKweight0}, which appears, as Pioline later pointed out, as a special case of Theorem 1.1 of \cite{Fay}. To describe it, under the restriction $0\leq Z<1$, $a\in\N$, and $b\in\Z$, we set 
\begin{equation}\label{eqn:beta0def}
\beta_0\left(Z; a,b\right):=\beta\left(Z; a,b\right)-\mathcal{C}_{a,b} \hspace{7mm} \text{ with }\hspace{7mm}
\mathcal{C}_{a,b}:=\sum_{\substack{0\leq j\leq a-1\\ j\neq -b}} \binom{a-1}{j}\frac{(-1)^j}{j+b}.
\end{equation}
Making the change of variables $t \mapsto 1-t$ in the integral representation and then applying the Binomial Theorem, we obtain
\begin{equation}\label{betid}
	\beta_0 (Z;a,b) = \sum_{ \substack{0\leq j \leq a-1 \\ j\neq -b}} \binom{a-1}{j} \frac{(-1)^{j+1}}{j+b} (1-Z)^{j+b} +\delta_{1-a\leq b \leq 0} \binom{a-1}{-b} (-1)^{b+1} \log (1-Z).
\end{equation}
Here for a property $S$, $\delta_S=1$ if $S$ is satisfied and $\delta_S=0$ otherwise.

 For every $\varrho\in \H$, a polar harmonic Maass form $\mathcal{M}$ of weight $2-2k$ (or more generally any function $\mathcal{M}$ that is annihilated by $\Delta_{2-2k}$ and has a singularlity of finite order at $\varrho$), there exist $c_{\mathcal{M},\varrho}^{\pm}(n)\in\C$ such that for $r_{\varrho}(z)\ll_{\varrho} 1$ one has
\begin{equation}\label{eqn:expw}
\mathcal{M}(z)=\left(z-\overline{\varrho}\right)^{2k-2}\left(\sum_{n\gg -\infty}c_{\mathcal{M},\varrho}^+(n) X_{\varrho}(z)^n + \sum_{n\ll\infty }c_{\mathcal{M},\varrho}^-(n) \beta_0\left(1-r_{\varrho}(z)^2;2k-1,-n\right) X_{\varrho}(z)^n\right).
\end{equation}
 The \begin{it}meromorphic\end{it} and the \begin{it}non-meromorphic parts\end{it} of the elliptic expansion around $\varrho$ are
\begin{align*}
\mathcal{M}_{\varrho}^+ (z)&:=\left(z-\overline{\varrho}\right)^{2k-2}\sum_{n\gg -\infty}c_{\mathcal{M},\varrho}^+(n)
X_{\varrho}(z)^n,\\
\mathcal{M}_{\varrho}^- (z)&:=\left(z-\overline{\varrho}\right)^{2k-2}\sum_{n\ll \infty} c_{\mathcal{M},\varrho}^-(n)\beta_0\left(1-r_{\varrho}(z)^2;2k-1,-n\right) X_{\varrho}(z)^n.
\end{align*}
We refer to the terms in \eqref{eqn:expw} that grow as $z\to \varrho$ as the \begin{it}principal part around $\varrho$\end{it} and denote them by $\ppart_{\mathcal{M},\varrho}$; the corresponding meromorphic and non-meromorphic parts of $\ppart_{\mathcal{M},\varrho}$ are 
\begin{align*}
	\ppart_{\mathcal{M},\varrho}^{+}(z)&:=\left(z-\overline{\varrho}\right)^{2k-2}\sum_{n<0}c_{\mathcal{M},\varrho}^+(n)X_\varrho(z)^n,\\
	\ppart_{\mathcal{M},\varrho}^{-}(z)&:= \left(z-\overline{\varrho}\right)^{2k-2}\sum_{n\geq 0} c_{\mathcal{M},\varrho}^-(n)\beta_0\left(1-r_{\varrho}(z)^2;2k-1,-n\right) X_{\varrho}(z)^n.
\end{align*}

\begin{remark}
Note that the principal parts of the Fourier expansions around all cusps and the principal parts of the elliptic expansions uniquely determine the form.  Indeed, Proposition 3.5 of \cite{BruinierFunke} implies that harmonic Maass forms $\mathcal{M}$ without any singularities must satisfy $\xi_{2-2k}(\mathcal{M})=0$ and there are no non-trivial negative-weight holomorphic modular forms.
\end{remark}

\subsection{Differential operators}\label{sec:diffops}
Recall the raising operator defined in \eqref{raising}.  If $g$ has eigenvalue $\lambda$ and weight $\kappa$, then $R_\kappa^\ell(g)$ ($\ell\in\N_0$) has weight $\kappa+2\ell$ and eigenvalue $\lambda+\kappa\ell+\ell(\ell-1)$. The following lemma may easily be verified by induction on $\ell$.
	\begin{lemma}\label{lem:raiserepeat}
 		For $\ell\in\N_0$ and $g:\H\to\C$ satisfying $\Delta_\kappa(g)=\lambda g$, we have
 		\begin{equation*}
 		R_{-\kappa-2\ell}^{\ell}\left( y^{2\ell+\kappa}\overline{R_\kappa^\ell\left(g(z)\right)}\right) = y^\kappa \prod\limits_{j=1}^{\ell}\left(-\overline{\lambda}-j(j+\kappa-1)\right)\overline{g(z)}.
 		\end{equation*}
 	\end{lemma}
The next lemma rewrites the elliptic coefficients of a  meromorphic function $f$ in terms of the raising operator and $\eta:=\im(\varrho)$. Its proof may be found in Proposition 17 of \cite{BGHZ}.
\begin{lemma}\label{lem:ellexpraise}
	If $f: \mathbb{H}\to\C$ is a meromorphic function that is holomorphic in some neighborhood of $\varrho\in\H$ and $\kappa\in\Z$, then for $z$ in this neighborhood we have
	\[
	f(z)=(2i\eta)^{\kappa} (z-\overline{\varrho})^{-\kappa}\sum_{n\geq 0} \frac{\eta^n}{n!}R_{\kappa}^n(f(\varrho)) X_{\varrho}(z)^{n}.
	\]
\end{lemma}
	We also recall that raising and differentiation are related through Bol's identity ($k\in\N$)
\begin{equation}\label{Bol}
		\mathcal D^{2k-1}=-(4\pi)^{1-2k} R_{2-2k}^{2k-1}.
\end{equation}
We note that the constant $\mathcal{C}_{a,b}$ in \eqref{eqn:beta0def} is chosen so that the operator $\mathcal{D}^{2k-1}$ acts nicely on $\mathcal{M}_{\varrho}^-$.  Namely, most of the terms in $\mathcal{M}_{\varrho}^-$ are annihilated by $\mathcal{D}^{2k-1}$.  One can use this to conclude that $\mathcal{D}^{2k-1}$ maps polar harmonic Maass forms of weight $2-2k$ to meromorphic modular forms of weight $2k$. One can also easily show that $\xi_{2-2k}$ maps polar harmonic Maass forms of weight $2-2k$ to meromorphic modular forms of weight $2k$. This operator may also be written in terms of the raising operator. 
More precisely, for every $g:\mathbb H\rightarrow \C$ we have the equality
\begin{equation}\label{XiR}
\xi_{\kappa}\left(y^{-\kappa}\overline{g(z)}\right)=R_{-\kappa}(g(z)).
\end{equation}

\subsection{Poincar\'e series}

In this section we review the Maass--Poincar\'e series (see Theorem 3.1 of \cite{Fay}) with singularities at the cusps, which are used as inputs of the theta lifts of Theorems \ref{thm:liftfkd} and \ref{thm:Gpolar} (1), and Petersson's meromorphic Poincar\'e series \cite{Pe3,Pe1}, which are closely connected to $f_{\mathcal{A}}$.

To construct the Maass--Poincar\'e series we define, for $Z\in\R\backslash \{0\}$, the expression
\begin{equation*}
\mathcal{M}_{\wt,s}\left(Z\right):=\left|Z\right|^{-\frac{\wt}{2}}M_{\frac{\wt}{2}\sgn(Z),\, s-\frac{1}{2}}\left(|Z|\right),
\end{equation*}
with $M_{\mu,\nu}$ the usual $M$-Whittaker function.   For $\mu,s\in \mathbb{C}$ with $\re\left(s\pm \mu \right)>0$ 
and $Z\in\R^+$ we have
\[
M_{\mu,s-\frac{1}{2}}(Z)=Z^{s}e^{\frac{Z}{2}}\frac{\Gamma(2s)}{\Gamma\left(s+\mu\right)\Gamma\left(s-\mu\right)}\int_0^1 t^{s+\mu-1}(1-t)^{s-\mu-1}e^{-Zt}dt.
\]
For $s= \pm \mu$, we have the well-known identities
\begin{equation}\label{eqn:MWhitSpec}
M_{\mu,\mu-\frac12}(Z)=e^{-\frac{Z}{2}} Z^{\mu}\text{ and } M_{-\mu,\mu-\frac{1}{2}}(Z) = e^{\frac{Z}{2}}Z^{\mu}.
\end{equation}
 For $m\in\Z\setminus\{0\}$ and $\kappa\in\frac{1}{2}\Z$, the function
$$
\psi_{\wt,m,s}\left(\tau\right):=
\left(4\pi |m|\right)^{\frac{\wt}{2}}
\mathcal{M}_{\wt,s}\left(4\pi m v\right)e^{2\pi i mu}
$$
is then an eigenfunction of $\Delta_{\wt}$ with eigenvalue $(s-\frac{\kappa}{2})(1-s-\frac{\kappa}{2})$.  Denoting by $|_{\kappa}\pr$ the identity for $\kappa\in\Z$ and Kohnen's projection operator (see p. 250 of \cite{Kohnen}) for $\kappa\notin\Z$, one concludes that for $\sigma:=\re(s)>1$, the following Poincar\'e series are also eigenfunctions of $\Delta_{\kappa}$ and have weight $\kappa$:
\begin{equation}
\label{eqn:Psdef}
P_{\wt,m,s}:=\sum_{\gamma\in \Gamma_{\infty}\backslash \Gamma_0(4)} \psi_{\wt,\sgn(\wt)m,s}\Big|_{\wt}\gamma\Big|_{\wt}\pr.
\end{equation}
If $s=1-\frac{\kappa}{2}$ or $s=\frac{\kappa}{2}$, then the functions $P_{\wt,m,s}$ are harmonic. 
 The Poincar\'e series satisfy the growth condition
\[
P_{\wt,m,s}(\tau)-\psi_{\wt,m,s}\left(\tau\right)\big|_{\wt}\pr = O\left(v^{1-\re(s)-\frac{\wt}{2}}\right).
\]
Here we simply abbreviate the operator appearing on the right-hand side of the identity on page 250 of \cite{Kohnen} by $\pr$, despite the fact that $\psi_{\wt,m,s}$ is not modular. In the special case that $\wt=k+\frac12>1$ and $s=\frac{\kappa}{2}$, we normalize the resulting weakly holomorphic Poincar\'e series as  
\begin{equation}\label{eqn:gDdef}
P_{k+\frac{1}{2},m}:=\frac{6(4\pi)^{\frac{k}{2}-\frac{1}{4}}}{(k-1)!|m|^{\frac{1}{4}}}P_{k+\frac{1}{2},m,\frac{k}{2}+\frac{1}{4}}.
\end{equation}
For $\wt=\frac32-k<1$ we set
\begin{equation}\label{eqn:PnD3/2-kdef}
\mathcal{P}_{\frac{3}{2}-k,m}:=-\frac{6 (4\pi)^{\frac{k}{2}-\frac{1}{4}} }{(k-1)!|m|^{\frac{1}{4}}(2k-1)}  P_{\frac{3}{2}-k,m,\frac{k}{2}+\frac{1}{4}}.
\end{equation}

We turn next to Poincar\'e series with singularities in the upper half-plane, defining, for $\kappa>2$ even and $n\in\Z$,
\begin{equation}\label{eqn:Psidef}
\Psi_{\kappa,n}(z,\mathfrak{z}):=\sum_{\gamma\in\SL_2 (\Z)}\!\Big(\left(z-\overline{\mathfrak{z}}\right)^{-\kappa}X_{\mathfrak{z}}(z)^n\Big)\bigg|_{\kappa,z}\gamma.
\end{equation}
One has $z\mapsto \Psi_{\kappa,n}(z,\mathfrak{z})\in\SS_{\kappa}$ and $\mathfrak{z}\mapsto\mathbbm{y}^{-\kappa-n}\Psi_{\kappa,n}(z,\mathfrak{z})$ are modular of weight $-2n-\kappa$ (see page 72 of \cite{Pe1}).  Moreover, the functions $z\mapsto \Psi_{\kappa,n}(z,\mathfrak{z})\in\SS_{\kappa}$ vanish identically if $n\not\equiv -\kappa/2\pmod{\omega_{\mathfrak{z}}}$ and are cusp forms in $z$ if $n\in\N_0$. Furthermore, the set $\{\Psi_{\kappa,n}(z,\mathfrak{z}):  \mathfrak{z}\in\H ,n\in\Z\}$ spans $\SS_{\kappa}$ (see S\"atze 7 and 9 of \cite{Pe2}).  The principal part of $z\mapsto \Psi_{\kappa,m}(z,\mathfrak{z})$ has a simple shape. To be more precise, set $f(z)=(2\omega_{\mathfrak{z}})^{-1}\Psi_{\kappa,m}(z,\mathfrak{z})$ and write $c(n):=c_{f,\varrho}(n)-\delta_{n=m}\delta_{\mathfrak{z}=\varrho}$, where in the latter $\delta$-term, and throughout the paper in similar identities, we consider $\mathfrak z$ and $\varrho$ as elements of $\SL_2(\Z)\backslash\H$. Using this notation, Satz 7 of \cite{Pe2} implies that
\begin{equation}\label{eqn:Psiexp}
\left(2\omega_{\mathfrak{z}}\right)^{-1}\Psi_{\kappa,m}\left(z,\mathfrak{z}\right)=\left(z-\overline{\varrho} \right)^{-\kappa}\left(\delta_{\mathfrak{z}=\varrho}X_{\varrho}(z)^m +\sum_{n\geq 0}  c(n) X_{\varrho}(z)^n\right).
\end{equation}
 Moreover, $f_{\mathcal{A}}$ is a specialization of $\Psi_{2k,-k}$, as given in the following straightforward lemma.
\begin{lemma}\label{lem:fPsi}
With $Q_0\in\mathcal{A}\in\mathcal{Q}_{-D}/\SL_2(\Z)$ we have
\begin{equation}\label{eqn:fkDPsi}
f_{\mathcal{A}}(z)=\frac{\left(2v_{Q_0}\right)^{k}}{2\omega_{\tau_{Q_0}}} \Psi_{2k,-k}\left(z,\tau_{Q_0}\right).
 \end{equation}
\end{lemma}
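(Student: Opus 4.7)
The plan is to rewrite the seed function $(z-\overline{\tau_Q})^{-2k}X_{\tau_Q}^{-k}(z)$ defining $\Psi_{2k,-k}(z,\tau_Q)$ directly in terms of $Q(z,1)^{-k}$, and then to match the sum over $M\in\SL_2(\Z)$ in \eqref{eqn:Psidef} to the sum over $\mathcal{Q}\in[Q]$ defining $f_Q$ by tracking the stabilizer of $Q$.

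\textbf{Step 1 (rewriting the seed).} Applying \eqref{rewriteQ} and solving for $(z-\overline{\tau_Q})^{-2k}X_{\tau_Q}^{-k}(z)$ gives
\[
\left(z-\overline{\tau_Q}\right)^{-2k}X_{\tau_Q}^{-k}(z)=\left(\frac{\sqrt{D}}{2v_Q}\right)^{k}Q(z,1)^{-k}.
\]

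\textbf{Step 2 (slash action versus action on forms).} Define the $\SL_2(\Z)$-action on integral binary quadratic forms by $(M.\mathcal{Q})(X,Y):=\mathcal{Q}(aX+bY,cX+dY)$ for $M=\sm{a}{b}{c}{d}$. Then
\[
(M.Q)(z,1)=(cz+d)^{2}Q(Mz,1),
\]
so that $Q(z,1)^{-k}\big|_{2k,z}M=(M.Q)(z,1)^{-k}$. Consequently, using Step 1 and the definition \eqref{eqn:Psidef},
\[
\Psi_{2k,-k}\!\left(z,\tau_Q\right)=\left(\frac{\sqrt{D}}{2v_Q}\right)^{k}\sum_{M\in\SL_2(\Z)}(M.Q)(z,1)^{-k}.
\]

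\textbf{Step 3 (counting the stabilizer).} Since $Q$ is positive-definite and any $M\in\SL_2(\Z)$ stabilizing $Q$ necessarily fixes the CM-point $\tau_Q$ (as $Q(\tau_Q,1)=0$ forces $M\tau_Q=\tau_Q$), the stabilizer of $Q$ in $\SL_2(\Z)$ is the full preimage of $\Gamma_{\tau_Q}\subset\PSL_2(\Z)$; note $-I$ acts trivially on $Q$. This stabilizer has order $2\omega_{\tau_Q}$, so the orbit sum covers $[Q]$ with uniform multiplicity:
\[
\sum_{M\in\SL_2(\Z)}(M.Q)(z,1)^{-k}=2\omega_{\tau_Q}\sum_{\mathcal{Q}\in[Q]}\mathcal{Q}(z,1)^{-k}.
\]

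\textbf{Step 4 (assembly).} Combining Steps 2 and 3 and multiplying by $D^{k/2}$,
\[
D^{\frac{k}{2}}\sum_{\mathcal{Q}\in[Q]}\mathcal{Q}(z,1)^{-k}=\frac{D^{\frac{k}{2}}}{2\omega_{\tau_Q}}\left(\frac{2v_Q}{\sqrt{D}}\right)^{k}\Psi_{2k,-k}\!\left(z,\tau_Q\right)=\frac{(2v_Q)^{k}}{2\omega_{\tau_Q}}\Psi_{2k,-k}\!\left(z,\tau_Q\right),
\]
which is \eqref{eqn:fkDPsi}.

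The only real point of care is Step 3: one must verify that the stabilizer of a positive-definite $Q$ in $\SL_2(\Z)$ agrees with the $\SL_2$-preimage of the elliptic isotropy group $\Gamma_{\tau_Q}\subset\PSL_2(\Z)$ (including the central $-I$) so that the ``$2\omega_{\tau_Q}$'' rather than ``$\omega_{\tau_Q}$'' appears. Everything else is a matter of matching conventions between the slash action and the action on binary quadratic forms.
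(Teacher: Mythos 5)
Your argument is correct and uses exactly the ingredients the paper relies on for this (unproved, ``straightforward'') lemma: the factorization \eqref{rewriteQ} to convert the seed $(z-\overline{\tau_Q})^{-2k}X_{\tau_Q}^{-k}(z)$ into $Q(z,1)^{-k}$, the intertwining $(Q\circ M)(z,1)=Q(z,1)|_{-2}M$ (your Step 2), and the orbit--stabilizer count giving the factor $2\omega_{\tau_Q}$ --- the same computation appears in the paper as \eqref{eqn:fkDslash} in the proof of Theorem \ref{thm:Gpolar}, with the sum there taken over $\Gamma_Q\backslash\SL_2(\Z)$ and a prefactor $\tfrac12$ in place of your explicit stabilizer count. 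Your Step 3 justification (a positive-definite form of fixed discriminant is determined by its root, so the stabilizer of $Q$ is the full $\SL_2(\Z)$-preimage of $\Gamma_{\tau_Q}$, of order $2\omega_{\tau_Q}$ because $-I$ acts trivially) is sound.
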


\subsection{Higher Green's functions}\label{sec:Greens}
For $z, \mathfrak{z}\in \mathbb{H}$ and $s\in\mathbb{C}$ with $\sigma>1$, the \textit{automorphic Green's function $G_s$} on $\SL_2(\Z)\backslash\H$ is given by
\begin{equation*}
G_s(z,\mathfrak{z}): =
\sum_{\gamma\in\SL_2(\Z)}g^{\mathbb{H}}_{s}(z,\gamma\mathfrak{z}),
\end{equation*}
where
\begin{equation*}
g^{\mathbb{H}}_{s}(z,\mathfrak{z}):=-\frac{2^{s-1}\,\Gamma(s)^2}{ \Gamma(2s)}
\cosh(d(z,\mathfrak{z}))^{-s}{_2F_1}\left(\frac{s}{2},\frac{s+1}{2};s+\frac{1}{2};\frac{1}{\cosh(d(z,\mathfrak{z}))^2}\right).
\end{equation*}
Note that we have the equality $g^{\mathbb{H}}_{s}(z,\mathfrak{z})=-Q_{s-1}(\cosh(d(z,\mathfrak{z})))$, with $Q_{\nu}$ the associated Legendre function of the second kind.  Furthermore, note that there are different normalizations of $G_s$ in the literature; our normalization agrees with the one from \cite{Mellit}. Automorphic Green's functions can be defined for arbitrary Fuchsian groups of the first kind, and hence in particular for any congruence group. They also arise as the resolvent kernel for the hyperbolic Laplacian (see, e.g.~\cite{Fay, Hejhal}).

In the case $s=k\in\mathbb{N}_{>1}$, the function
$G_k: \mathbb{H}\times\mathbb{H}\to\mathbb{C}$ is called a \textit{higher Green's function}. It is uniquely
characterized by the following properties:
\begin{enumerate}[leftmargin=*]
\item  The function $G_k$ is smooth and real-valued on $\mathbb{H}\times\mathbb{H}\setminus \{(z, \gamma z):\gamma\in\SL_2(\Z), z\in\mathbb{H}\}.$
\item  For $\gamma_1, \gamma_2\in\SL_2(\Z)$ we have $G_k(\gamma_1 z, \gamma_2\mathfrak{z})= G_k(z, \mathfrak{z}).$
\item We have
\[
\Delta_{0, z}\!\left(G_k\!\left(z, \mathfrak{z}\right)\right)=\Delta_{0, \mathfrak{z}}\!\left(G_k\!\left(z, \mathfrak{z}\right)\right)=k(1-k)G_k\left(z, \mathfrak{z}\right).
\]
\item As $z\to\mathfrak{z}$ we have
\[
G_k(z, \mathfrak{z})=2\omega_{\mathfrak{z}}\log\left(r_{\mathfrak{z}}(z)\right)+O(1).
\]
\item As $z$ approaches a cusp, we have $G_k(z, \mathfrak{z})\to 0$.
\end{enumerate}

These higher Green's functions have a long history
(cf. \cite{Fay,GZ,Hejhal,Mellit}).  For example,
Gross and Zagier \cite{GZ} conjectured that their evaluations at CM-points are essentially logarithms of algebraic numbers. If $S_{2k}(\Gamma)=\{0\}$, with $\Gamma\subseteq\SL_2(\Z)$ of finite index, then the conjecture states that 
\[
G_k(z, \mathfrak{z})=(D_1 D_2)^{\frac{1-k}{2}}\log(\alpha)
\]
for CM-points $z, \mathfrak{z}$  of discriminants $D_1 \text{ and } D_2$ respectively and some algebraic number $\alpha$. Various cases of this conjecture have been proved. For example, Mellit \cite{Mellit} proved the case with $k=2$ and $\mathfrak{z}=i$ 
for $\Gamma=\SL_2(\Z)$, and 
also interpreted $\alpha$ as an intersection number of certain algebraic cycles. Further cases were then investigated by Viazovska \cite{Vi}.

\section{Regularized Petersson inner products and the proof of Theorem \ref{thm:innerconverge}}\label{sec:regularization}

\subsection{Known regularized inner products}\label{3.1}
The classical Petersson inner product of two weight $\kappa\in \frac{1}{2}\Z$ (holomorphic) modular forms $f$ and $g$ on $\Gamma_0(N)$ such that $fg$ is a cusp form is given by
\begin{equation}\label{Petint}
\langle f,g \rangle :=
\frac{1}{\left[\SL_2(\Z):\Gamma_0(N)\right]}\int_{\Gamma_0(N)\backslash \H}
 f(z) \overline{g(z)} y^{\kappa}\frac{dx dy}{y^2}.
\end{equation}

Although \eqref{Petint} generally diverges for meromorphic modular forms, one may still define a regularized inner product in some cases.  The first to do so appears to be  Petersson \cite{Pe2}.  If all of the poles of $f$ and $g$ are at the cusps, the regularization of Petersson was later rediscovered and extended by Harvey-Moore \cite{HM} and Borcherds \cite{Bo1}, and subsequently used by Bruinier \cite{Bruinier} and others to obtain a regularized integral that exists in many cases.  Indeed, Petersson gave explicit necessary and sufficient conditions for existence of his regularized inner product in Satz 1a of \cite{Pe2}.  In particular, for $f,g\in\SS_{\kappa}$, his regularization exists if and only if for every $n<0$ and $\varrho \in \mathbb{H}$ we have
\begin{equation}\label{eqn:regexist}
c_{f,\varrho}(n)c_{g,\varrho}(n)=0;
\end{equation}
the conditions are similar if $f$ and $g$ have singularities at the cusps. This regularization is used to define theta lifts of functions with singularities, some of which are evaluated in this paper.

To give a full definition we require some notation.  We let $F_T$ be the restriction of the standard fundamental domain for $\SL_2(\Z)$ to 
those $z$ with $y\leq T$, and let $F_T(N):=\bigcup_{\gamma\in\Gamma_0(N)\backslash \SL_2(\Z)} \gamma F_T.$  For functions $f$ and $g$ transforming like modular forms of weight $\kappa\in \frac{1}{2}\Z$ for $\Gamma_0(N)$ we define
\begin{equation}\label{eqn:innerweaklyPe}
\left<f,g\right> :=\frac{1}{\left[\SL_2(\Z):\Gamma_0(N)\right]} \lim_{T\to\infty}\int_{F_T(N)} f(z)\overline{g(z)}  y^{\kappa}\frac{dx dy}{y^2},
\end{equation}
in the case the integral exists.

The definition above may be interpreted as cutting out neighborhoods around cusps and letting the hyperbolic volume of the neighborhood shrink to zero. If poles exist in $\H$, the construction in \cite{Pe2} is similar.  For $f,g\in\SS_{2k}$ with poles at $\mathfrak{z}_1,\dots,\mathfrak{z}_r\in \SL_2(\Z)\backslash\H$, we choose a fundamental domain $F^*$ such that the representatives of $\mathfrak{z}_1,\dots,\mathfrak{z}_r$ in $F^*$ (also denoted by $\mathfrak{z}_{\ell}$) all lie in the interior of $\Gamma_{\mathfrak{z}_\ell}F^*$. We then set $\mathcal{B}_{\varepsilon}(\mathfrak{z}):=\{ z\in\H : r_{\mathfrak{z}}(z)<\varepsilon\}$, and define Petersson's regularized inner product as
\begin{equation}\label{eqn:innermeroPe}
\langle f,g \rangle := \lim_{\varepsilon_1,\dots,\varepsilon_r\to 0^+} \int_{F^*{\backslash} \bigcup_{\ell=1}^r \mathcal{B}_{\varepsilon_{\ell}}\left(\mathfrak{z}_\ell\right)} f(z) \overline{g(z)} y^{2k}\frac{dx dy}{y^2}.
\end{equation}
Like \eqref{eqn:innerweaklyPe}, the regularized inner product \eqref{eqn:innermeroPe} does not always exist.  The inner product \eqref{eqn:innerweaklyPe} has recently been further extended to an inner product on all weakly holomorphic modular forms by Diamantis, Ehlen, and the first author in \cite{BDE}, and we address the extension of \eqref{eqn:innermeroPe} to all meromorphic cusp forms in the next section.

\subsection{A new regularization}
In this section we restrict to $\kappa=2k\in 2\Z$ and $N=1$, but the construction can be easily generalized to subgroups.  We also assume that $f$ and $g$ decay like cusp forms towards the cusps, but this restriction can be removed by combining with the regularization from Subsection \ref{3.1}. We choose a fundamental domain $F^*$ as in Subsection \ref{3.1} and denote the poles of $f$ and $g$ in $F^*$ by $\mathfrak{z}_1,\dots,\mathfrak{z}_{r}$.

For an analytic function $A(s)$ in $s=(s_1,\dots,s_r)$, denote by $\mathrm{CT}_{s=0}A(s)$  the constant term of the meromorphic continuation of $A(s)$ around $s_1=\cdots =s_{r}=0$, and define
\begin{equation}\label{eqn:OurReg}
\left<f,g\right>:= \operatorname{CT}_{s=0}\left(\int_{\SL_2(\Z)\backslash \H} f(z) H_s(z) \overline{g(z)} y^{2k}\frac{dx dy}{y^2}\right)
\end{equation}
where
\[
H_s (z) = H_{s_1,\dots, s_r, \mathfrak{z}_1,\dots,\mathfrak{z}_r} (z) := \prod_{\ell=1}^r h_{s_{\ell},\mathfrak{z}_\ell} (z).
\]
Here for $\mathfrak{z}_\ell\in F^*$ and $z\in\mathbb{H}$ we set $h_{s_{\ell},\mathfrak{z}_\ell}(z):=r_{\mathfrak{z}_\ell}(\gamma z)^{2s_{\ell}}$, with $\gamma \in \SL_2(\Z)$ chosen such that $\gamma z\in F^*$.  Note that  $r_{\mathfrak{z}_\ell}(\gamma z)\to 0$ as $z\to \gamma^{-1}\mathfrak{z}_\ell$, so the integral in \eqref{eqn:OurReg} converges for $\sigma\gg 0$, where this notation means that for every $1\leq \ell\leq r$, $\sigma_{\ell}:=\re(s_{\ell})\gg 0$.  One can show that the regularization is independent of the choice of fundamental domain.

\begin{proof}[Proof of Theorem \ref{thm:innerconverge}]

For $\delta>0$ sufficiently small, we may assume that the $\mathcal{B}_{\delta}(\mathfrak{z}_\ell)$ are disjoint and split off the integral over those $z$ that lie in one of these balls.

If $z\notin \mathcal{B}_{\delta}(\mathfrak{z}_\ell)$ for all $\ell$, then one can bound the integrand locally uniformly for $s$ contained in a small open neighborhood around $0$.  Hence we conclude that
\begin{equation}\label{eqn:largereval}
\operatorname{CT}_{s=0}\left(\int_{F^*\backslash\bigcup_{\ell=1}^{r} \mathcal{B}_{\delta}\left(\mathfrak{z}_\ell\right)} f(z)H_s(z) \overline{g(z)} y^{2k}\frac{ dx dy}{y^2}\right)=\int_{F^* \backslash  \bigcup_{\ell=1}^{r} \mathcal{B}_{\delta}\left(\mathfrak{z}_\ell\right)} f(z)\overline{g(z)}y^{2k}\frac{ dx dy}{y^2}.
\end{equation}
Thus we are left to show existence of the meromorphic continuation to a small open neighborhood around $0$ of 
\begin{equation}\label{eqn:ballint}
\int_{\mathcal{B}_{\delta}(\mathfrak{z}_\ell)\cap F^*} f(z) H_s(z) \overline{g(z)} y^{2k}\frac{dx dy}{y^2}.
\end{equation}
By construction, $\mathfrak{z}_\ell$ lies in the interior of $\Gamma_{\mathfrak{z}_\ell}F^*$, so we may assume that $\mathcal{B}_{\delta}(\mathfrak{z}_\ell)\subseteq \Gamma_{\mathfrak{z}_\ell}F^*$.
To rewrite \eqref{eqn:ballint} as an integral over the entire ball $\mathcal{B}_{\delta}(\mathfrak{z}_\ell)$, we decompose 
the ball 
into the disjoint union
\begin{equation}\label{eqn:Ballsplit}
\mathcal{B}_{\delta}(\mathfrak{z}_\ell) =\overset{\bullet}{\bigcup}_{\gamma\in \Gamma_{\mathfrak{z}_\ell}} \gamma\left( \mathcal{B}_{\delta}\left(\mathfrak{z}_\ell\right)\cap F^*\right).
\end{equation}
Moreover, bounding $h_{s_m,\mathfrak{z}_{m}}(z)$ locally uniformly for $\sigma_m>-\varepsilon$ for $m\neq \ell$, we may plug in $s_m=0$, and hence the invariance of the integrand under $\SL_2(\Z)$ implies that the constant term at $s=0$ of \eqref{eqn:ballint} is the constant term at $s_{\ell}=0$ of
$$
\mathcal{I}_{s_{\ell},\mathfrak{z}_\ell,\delta}(f,g):= \frac{1}{\omega_{\mathfrak{z}_\ell}}\int_{\mathcal{B}_{\delta}(\mathfrak{z}_\ell)} f(z) h_{s_{\ell},\mathfrak{z}_\ell}(z) \overline{g(z)} y^{2k}\frac{dx dy}{y^2}.
$$
Setting $R:=r_{\mathfrak{z}_\ell}(z)$ for $z\in \mathcal{B}_{\delta}(\mathfrak{z}_\ell)$, we have for $\gamma\in \Gamma_{\mathfrak{z}_\ell}$ the equality
\begin{equation}\label{eqn:requal}
h_{s_{\ell},\mathfrak{z}_\ell}(z)=r_{\mathfrak{z}_\ell}(\gamma z)^{2s_{\ell}}=R^{2s_{\ell}}.
\end{equation}
We now closely follow the proof of Satz 1a in \cite{Pe2}. We rewrite
\[
y=\frac{\mathbbm{y}_{\ell}\left(1-r_{\mathfrak{z}_{\ell}}^2(z)\right)}{\left|1-X_{\mathfrak{z}_{\ell}}(z)\right|^2}
\]
for $\mathfrak{z}_{\ell}=\mathbbm{x}_\ell+i\mathbbm{y}_\ell$ and compute
\[
dz=\frac{2iy}{\left(1-X_{\mathfrak{z}_{\ell}}(z)\right)^2} dX_{\mathfrak{z}_{\ell}}(z).
\]
Hence changing variables $X_{\mathfrak{z}_\ell}(z) = Re^{i\vartheta}$ and inserting the elliptic expansions \eqref{eqn:fEllExp} of $f, g$ around $\varrho=\mathfrak{z}_\ell$ yields
\begin{align}
\nonumber\mathcal{I}_{s_{\ell},\mathfrak{z}_\ell,\delta}(f,g)&=
\frac{1}{\omega_{\mathfrak{z}_\ell}} \int_{\mathcal{B}_\delta(\mathfrak{z}_\ell)}  f(z) \overline{g(z)}r_{\mathfrak{z}_\ell}(z)^{2s_{\ell}}  y^{2k} \frac{dx dy}{y^2}\\
\nonumber &=\frac{4}{\omega_{\mathfrak{z}_\ell}\left(4 \mathbbm{y}_\ell\right)^{2k}}\sum_{m,n\gg -\infty}
c_{f,\mathfrak{z}_\ell}(n)\overline{c_{g,\mathfrak{z}_\ell}(m)}\int_{0}^{\delta} \int_{0}^{2\pi} e^{i(n-m)\vartheta}R^{n+m+2s_{\ell}}\left(1-R^2\right)^{2k-2} R d\vartheta dR\\
&=\frac{8\pi}{\omega_{\mathfrak{z}_\ell}\left(4\mathbbm{y}_\ell\right)^{2k}}\sum_{n\gg -\infty}
c_{f,\mathfrak{z}_\ell}(n)\overline{c_{g,\mathfrak{z}_\ell}(n)} \int_{0}^{\delta} R^{1+2n+2s_{\ell}}\left(1-R^2\right)^{2k-2} dR.\label{eqn:innerconverge}
\end{align}
Plugging in the binomial expansion
of $(1-R^2)^{2k-2}$, the
 remaining integral in \eqref{eqn:innerconverge} becomes
\begin{equation*}
\sum_{j=0}^{2k-2} (-1)^j \binom{2k-2}{j} \int_{0}^{\delta} R^{1+2(n+j)+2s_{\ell}} dR = \sum_{j=0}^{2k-2} (-1)^j \binom{2k-2}{j} \frac{\delta^{2\left(n+j+1+s_{\ell}\right)}}{2\left(n+j+1+s_{\ell}\right)}.
\end{equation*}
Since this is meromorphic at $s_{\ell}=0$, its constant term at $s_{\ell}=0$ exists, yielding the existence of the inner product. 

We next prove that the inner product is Hermitian. For $f,g\in \SS_{2k}$, let $F_{f,g}$ denote the meromorphic continuation of the function defined for $s\in\C^r$ with $\sigma_{\ell}\gg 0$ by
\[
F_{f,g}(s):=\int_{ F^*} f(z)\overline{g(z)} H_s(z) y^{2k} \frac{dxdy}{y^2}.
\]
Since $\langle f, g\rangle$ always exists, $F_{f,g}$ has an expansion around $s=0$ of the shape
\[
F_{f,g}(s)=\sum_{n=(n_1,\ldots,n_r)\in\Z^r} a_{f,g}(n) s_1^{n_1}\cdot\cdots\cdot s_r^{n_r}
\]
with $\langle f,g\rangle=a_{f,g}(0)$.  Since $r_{\mathfrak{z}_\ell}(z)\in\R$, we have
$\overline{H_{\overline{s}}(z)}=H_{s}(z)$,
and thus
\[
\overline{\langle g,f\rangle}=\overline{a_{g,f}(0)}=\operatorname{CT}_{s=0}\!\left(\overline{F_{g,f}\left(\overline{s}\right)}\right)=\operatorname{CT}_{s=0}\!\left(\int_{ F^*} f(z)\overline{g(z)}\ \overline{H_{\overline{s}}(z)} y^{2k} \frac{dxdy}{y^2}\right)
=\langle f,g\rangle.
\]

We finally show that the new regularization agrees with Petersson's, wherever his exists. Setting $\mathcal{B}\left( \mathfrak{z}_\ell,\varepsilon,\delta\right):=\left\{ z\in\, \H\, : \varepsilon<r_{\mathfrak{z}_\ell}(z)<\delta\right\},$ Petersson's regularization equals
\begin{multline}\label{eqn:Petreg}
\lim_{\varepsilon_1,\dots,\varepsilon_r\to 0^+} \int_{ F^*{\backslash}\bigcup_{\ell=1}^r \mathcal{B}_{\varepsilon_{\ell}}\left(\mathfrak{z}_\ell\right)} f(z) \overline{g(z)} y^{2k}\frac{dx dy}{y^2}\\
=\int_{ F^*{\backslash} \bigcup_{\ell=1}^{r} \mathcal{B}_{\delta}\left(\mathfrak{z}_\ell\right)} f(z) \overline{g(z)} y^{2k}\frac{dx dy}{y^2} +\lim_{\varepsilon_1,\dots,\varepsilon_r\to 0^+} \int_{ F^*\cap\bigcup_{\ell=1}^{r} \mathcal{B}\left(\mathfrak{z}_\ell,\varepsilon_{\ell},\delta\right)} f(z) \overline{g(z)} y^{2k}\frac{dx dy}{y^2}.
\end{multline}
The first term on the right-hand side of \eqref{eqn:Petreg} is precisely the right-hand side of \eqref{eqn:largereval}.  It thus remains to prove that
\begin{equation}\label{eqn:limball}
\lim_{\varepsilon_{\ell}\to 0^+} \int_{  F^{*}\cap\mathcal{B}\left(\mathfrak{z}_\ell,\varepsilon_{\ell},\delta\right)}f(z) \overline{g(z)} y^{2k}\frac{dx dy}{y^2}
=\operatorname{CT}_{s_{\ell}=0}\mathcal{I}_{s_{\ell},\mathfrak{z}_\ell,\delta}(f,g).
\end{equation}
By the existence condition \eqref{eqn:regexist}, we may plug in $s_{\ell}=0$ in \eqref{eqn:innerconverge}, and obtain that
$$
\operatorname{CT}_{s_{\ell}=0}\mathcal{I}_{s_{\ell},\mathfrak{z}_\ell,\delta}(f,g)=\frac{8\pi}{\omega_{\mathfrak{z}_\ell}\left(4\mathbbm{y}_\ell\right)^{2k}}\sum_{n\geq 0}
c_{f,\mathfrak{z}_\ell}(n)\overline{c_{g,\mathfrak{z}_\ell}(n)}
\int_{0}^{\delta} R^{1+2n}\left(1-R^2\right)^{2k-2} dR.$$
Using \eqref{eqn:Ballsplit} and then following the calculation in \eqref{eqn:innerconverge}, plugging the elliptic expansion into the left-hand side of \eqref{eqn:limball} yields that the two regularizations match since
\[
\lim_{\varepsilon_{\ell}\to 0^+} \int_{\mathcal{B}\left(\mathfrak{z}_\ell,\varepsilon_{\ell},\delta\right)}f(z) \overline{g(z)} y^{2k} \frac{dx dy}{y^2}
= \frac{8\pi}{\left(4\mathbbm{y}_\ell\right)^{2k}}\sum_{n\geq 0}
c_{f,\mathfrak{z}_\ell}(n)\overline{c_{g,\mathfrak{z}_\ell}(n)}
\int_{0}^{\delta} R^{1+2n}\left(1-R^2\right)^{2k-2}dR.
\]
\end{proof}

\section{Theta lifts and the proofs of Theorems \ref{thm:liftfkd} and \ref{thm:Gpolar} (1)}\label{sec:theta}

\subsection{Proof of Theorem \ref{thm:liftfkd}}

Before proving Theorem \ref{thm:liftfkd}, we note a (well-known) fact about the theta kernel $\Theta_k$.
\begin{lemma}
The function
 $\tau\mapsto \Theta_k(z,\tau)$ grows at most polynomially towards the cusps and decays exponentially for $\tau\to i\infty$.
\end{lemma}
\begin{proof}
  To show that it exponentially decays towards $i\infty$, we use \eqref{eqn:Qrewrite} to rewrite the absolute value of the exponential in definition \eqref{eqn:thetadef1} as $e^{-2\pi v (Q_z^2 +|Q(z,1)|^2/y^2)}$, and then note that $Q_z^2 +|Q(z,1)|^2/y^2>0$ for $Q\neq [0,0,0]$.
Modularity then implies the claim at the other cusps.
\end{proof}

We specifically apply the theta lift $\Phi_k$ to the weight $k+\frac12$ weakly holomorphic Poincar\'e series $P_{k+\frac12,-D}$ defined in \eqref{eqn:gDdef} in order to obtain Theorem \ref{thm:liftfkd}.
\begin{proof}[Proof of Theorem \ref{thm:liftfkd}]
A standard unfolding argument (see, e.g., \cite{Zagiernotrapid}) combined with \eqref{eqn:MWhitSpec} gives that $\Phi_k(P_{k+\frac12,-D,\frac{k}{2}+\frac14})$ equals 
\begin{multline}\label{Zagiersplit}
\frac{(4\pi D)^{\frac{k}{2}+\frac{1}{4}}}{6}\lim_{T\to\infty}\left(\int_0^{T} \int_0^1 e^{-2\pi iD\tau}\overline{\Theta_k (z, \tau)}v^{k+\frac{1}{2}}\frac{dudv}{v^2}\vphantom{-\sum_{c\geq 1}\sum_{a\!\!\pmod{c}^*}\int_{S_{\frac{a}{c}}} e^{-2\pi iD\tau}\overline{\Theta_k (z, \tau)}v^{k+\frac{1}{2}}\frac{dudv}{v^2}}\right.
\\
\left.\vphantom{\int_0^{T} \int_0^1 e^{-2\pi iD\tau}\overline{\Theta_k (z, \tau)}v^{k+\frac{1}{2}}\frac{dudv}{v^2}}
-\sum_{c\geq 1}\sum_{a\!\!\pmod{c}^*}\int_{S_{\frac{a}{c}}} e^{-2\pi iD\tau}\overline{\Theta_k (z, \tau)}v^{k+\frac{1}{2}}\frac{dudv}{v^2}\right),
\end{multline}
where $a$ runs over residues modulo $c$ that are coprime to $c$ and for each $a$ and $c$ we denote by $S_{\frac{a}{c}}$ the disc of radius $(2c^2T)^{-1}$ tangent to the real axis at $\frac{a}{c}$. 
Note that the factor $\frac16=[\text{SL}_2(\mathbb Z):\Gamma_0(4)]$ comes from the fact that the inner product is taken over $\Gamma_0(4)$. 
Following an argument similar to the proof of Theorem 1.1 (2) in \cite{BKM}, 
the polynomial growth of $\tau\mapsto\Theta_k (z,\tau)$ towards the cusps yields that the second term of \eqref{Zagiersplit} does not contribute in the limit $T\to\infty$. To evaluate the integral in the first term of \eqref{Zagiersplit}, we plug in the defining series \eqref{eqn:thetadef1} and integrate over $u$ to obtain, as $T\rightarrow\infty$, the expression
\[
\frac{(4\pi D)^{\frac{k}{2}+\frac{1}{4}}}{6}y^{-2k}\sum_{Q\in\mathcal{Q}_{-D}}Q\left(\overline{z},1\right)^{k}\int_0^{\infty} e^{4\pi Dv-4 \pi Q_z^2 v}v^{k-1}dv.
\]
The claim now easily follows using \eqref{eqn:Qrewrite} to show that the integral on $v$ equals 
$$
\int_0^{\infty}e^{-\frac{4\pi |Q(z,1)|^2 v}{y^2}}v^{k-1}dv =\frac{(k-1)!}{(4\pi)^k}
y^{2k}|Q(z,1)|^{-2k}.
$$
\end{proof}

\subsection{Proof of Theorem \ref{thm:Gpolar} (1)}
The goal of this section is to compute the image of the Maass--Poincar\'e series $P_{\frac32-k,-D,s}$ defined in \eqref{eqn:Psdef} under $\Phi_{1-k}^*$, and to connect these images to the functions $\mathcal F_{\mathcal{A}}$.  We do so in the following theorem, which extends Theorem \ref{thm:Gpolar} (1).
\begin{theorem}\label{thm:thetalift}
For $s\in\C$ with $\sigma>1$ we have
\begin{align}\notag
&\Phi_{1-k}^*\left(P_{\frac{3}{2}-k,-D,s}\right)(z)\\
\label{eqn:Phivals}
&\qquad\qquad=\frac{D^s\,\Gamma\left(s+\frac{k}{2}-\frac14\right)}{6(4\pi)^{\frac{k}{2}-\frac14}}\sum_{Q\in \mathcal{Q}_{-D}} Q_z^{-2s-k+\frac{3}{2}} Q(z,1)^{k-1}{_2F_1}\left(s+\frac{k}{2}-\frac{1}{4},s+\frac{k}{2}-\frac{3}{4}; 2s; \frac{D}{Q_z^2}\right).
\end{align}
In particular, we have the equality
\[
\Phi_{1-k}^* \left( \mathcal{P}_{\frac{3}{2}-k,-D}\right) =\mathcal{F}_{1-k,-D}.
\]
\end{theorem}
\begin{remark} 
 After seeing a preliminary version of this paper, Zemel \cite{Zemel} has obtained further theta lifts related to vector-valued versions of $\mathcal{F}_{\mathcal{A}}$. 
\end{remark}
\begin{proof}[Proof of Theorem \ref{thm:thetalift}]
One can show that both sides of \eqref{eqn:Phivals} converge absolutely and locally uniformly for $\sigma>1$ and $
z\notin\{ \tau_Q: Q\in \mathcal{A}\}
$ and hence are analytic in $s$. It thus suffices to show \eqref{eqn:Phivals} for $\sigma>k-\frac32$.
Following the proof of Theorem \ref{thm:liftfkd} 
and noting that the map $[a,b,c]\mapsto [a,-b,c]$ is an involution on $\mathcal{Q}_{-D}$, we obtain 
\begin{equation}\label{eqn:Phi*eval}
\Phi_{1-k}^*\left(P_{\frac{3}{2}-k,-D,s}\right)(z)=
\frac{(4\pi D)^{\frac{1}{4}-\frac{k}{2}}}{6} \sum_{Q\in\mathcal{Q}_{-D}} Q_z Q(z,1)^{k-1}\mathcal{I}_s\left(\frac{Dy^2}{|Q(z,1)|^2}\right),
\end{equation}
where 
$$
\mathcal{I}_s(Z):=\int_0^{\infty}\mathcal{M}_{\frac{3}{2}-k,s}\left(v\right) v^{-\frac{1}{2}}e^{-\frac{v}{2}} e^{-\frac{v}{Z}} dv\quad\left(Z\in\R^+\right).
$$
Applying formula 7.621.1.~of \cite{GR} yields
\begin{equation}\label{eqn:Ieval}
\mathcal{I}_{s}(Z) =\Gamma\left(s+\frac{k}{2}-\frac{1}{4}\right)\left( \frac{Z}{Z+1}\right)^{s+\frac{k}{2}-\frac{1}{4}}{_2F_1}\left(s+\frac{k}{2}-\frac{1}{4}, s+\frac{k}{2}-\frac{3}{4}; 2s; \frac{Z}{Z+1}\right).
\end{equation}
Moreover, \eqref{eqn:Qrewrite} implies that 
\begin{equation}\label{eqn:rel-w-Qz}
\frac{Z}{Z+1}= \frac{D}{Q_z^2}
\end{equation}
for $Z:=Dy^2/|Q(z,1)|^2$, and substituting this and \eqref{eqn:Ieval} back into \eqref{eqn:Phi*eval} yields \eqref{eqn:Phivals}.

To prove the second claim, we use \eqref{2F1tr}, \eqref{2F1sym}, and \eqref{equ-beta-hypergeom} to obtain 
\begin{align}\nonumber
 	{_2F_1}\left(k,k-\frac{1}{2};k+\frac{1}{2}; W\right)
	&=
\nonumber
	(1-W)^{\frac{1}{2}-k}\,{_2F_1}\!\left(\frac{1}{2},k-\frac{1}{2};k+\frac{1}{2};\frac{W}{W-1}\right)\\
\label{eqn:NISTbeta}	&=(-1)^{k-\frac{1}{2}}\left(k-\frac{1}{2}\right)W^{-k+\frac{1}{2}}\,\beta\!\left(\frac{W}{W-1};k-\frac{1}{2},\frac{1}{2}\right).
 	\end{align}
Employing \eqref{eqn:NISTbeta} with $W:=D/Q_z^2$ and using \eqref{eqn:rel-w-Qz} to evaluate $W/(W-1)=-Z=-Dy^2/|Q(z,1)|^2$, we obtain the second claim, using the fact that $\sgn(Q_z)=1$.
\end{proof}

\section{Properties of the function $\mathcal{F}_{\mathcal{A}}$ and proof of theorem \ref{thm:Gpolar} (2)}\label{sec:FA}
\subsection{Relation to higher Green's functions}
The goal of this section is to write
the functions
 $\mathcal{F}_{\mathcal A}$ defined in \eqref{eqn:Gdef} in terms of higher Green's functions.  
 For this, set   
 \begin{align}\label{eqn:akndef}
 a_{k,n}:=
-\frac{(2k-2)!}{2^{k-1}(k-1)!}
 \begin{cases}1 & \text{if }n\geq k-1,\\
 \frac{n!}{(2k-2-n)!}& \text{if }n<k-1.
 \end{cases}
 \end{align}

 \begin{lemma}\label{lem:diffopseed}
 	For $Q\in\mathcal{Q}_{-D}$ and $n\in\N_0$ we have
 	\[
 	R_{2-2k}^n\!\left(\mathbb{P}_{1-k,-D,Q}(z)\right) = a_{k,n}
 	\begin{cases} R_0^{n+1-k}\!\left(g_k^{\H}(z,\tau_{Q})\right) & \text{ if }n\geq k-1,\\[0.1cm]
 	 y^{2k-2-2n} \overline{R_{0}^{k-1-n}\!\left(g_k^{\H}(z,\tau_{Q})\right)} & \text{ if }
n\leq  k-1.
 	\end{cases}
 	\]
 \end{lemma}

 \begin{proof}
We first prove that for 
every $z,\mathfrak z\in\HH$ and $j\in\N_0$ we
 have
\begin{multline}\label{greenraise}
R_{0,z}^{j}\!\left(g^{\mathbb{H}}_{k}(z,\mathfrak{z})\right)
\\
=\frac{-2^{k-j-1}(k-1)!(k+j-1)!(\overline{z}-\mathfrak{z})^j(\overline{z}-\overline{\mathfrak{z}})^j}{(2k-1)!y^{2j}\mathbbm{y}^j\cosh(d(z,\mathfrak{z}))^{k+j}}\,{_2F_1}\left(\frac{k+j+1}{2},\frac{k+j}{2};k+\frac{1}{2};\frac{1}{\cosh(d(z,\mathfrak{z}))^2}\right).
\end{multline}
 	We note that the images under repeated raising of the Green's function are known in the literature (see, e.g., \cite{Mellit}), but some rewriting is still required to derive the form \eqref{greenraise}. So, for the convenience of the reader, we present a direct proof. To show \eqref{greenraise}, we first compute 
\begin{equation}\label{raisecos}
R_{0,z}\left(\cosh(d(z,\mathfrak{z}))\right)=-\frac{(\overline{z}-\mathfrak z)(\overline{z}-\overline{\mathfrak z})}{2y^2\mathbbm{y}},
\end{equation}
 from which we conclude that $R^{2}_{0,z}\left(\cosh(d(z,\mathfrak{z}))\right)=0$.  Employing these identities, induction on $j\in\mathbb{N}_0$ gives 
\begin{multline}\label{raisefirst}
R_{0,z}^{j}\!\left(g^{\mathbb{H}}_{k}(z,\mathfrak{z})\right)\\
=-\frac{2^{k-1}\,(k-1)!^2}{ (2k-1)!}\, \left(R_{0,z}\!\left(\cosh(d(z,\mathfrak{z}))\right) \right)^j\frac{\partial^j}{\partial Z^j}\left[Z^{-k}\,{_2F_1}\left(\frac{k}{2},\frac{k+1}{2};k+\frac{1}{2};\frac{1}{Z^2}\right)\right]_{Z=\cosh(d(z,\mathfrak{z}))}.
\end{multline}
 		Next, again by induction on $j\in\mathbb{N}_0$, and employing \eqref{diff2F1}, we obtain
 		\begin{align*}
 		\frac{\partial^j}{\partial Z^j}\left(Z^{-k}\,{_2F_1}\left(\frac{k}{2},\frac{k+1}{2};k+\frac{1}{2};\frac{1}{Z^2}\right)\right)
 		=\frac{(-1)^{j}(k+j-1)!}{(k-1)!Z^{k+j}}
		\,{_2F_1}\left(\frac{k+j+1}{2},\frac{k+j}{2};k+\frac{1}{2};\frac{1}{Z^2}\right).
 		\end{align*}
 		Plugging this and \eqref{raisecos} into \eqref{raisefirst} gives \eqref{greenraise}.
		
	Using \eqref{greenraise}, we next show the $n=0$ case of the assertion of Lemma \ref{lem:diffopseed}, namely
 	\begin{equation}\label{n0}
 	\mathbb{P}_{1-k,-D,Q}(z) = -\frac{1}{2^{k-1}(k-1)!} y^{2k-2}\overline{R_{0}^{k-1}\left(g_k^{\H}(z,\tau_Q)\right)}.
 	\end{equation}
 	For this we let $j=k-1$ in \eqref{greenraise}, which yields that
 	\begin{equation}\label{jk1}
	 	R_{0,z}^{k-1}\!\(g^{\mathbb{H}}_{k}(z,\mathfrak{z})\right)=-\frac{(k-1)!(\overline{z}-\mathfrak{z})^{k-1}(\overline{z}-\overline{\mathfrak{z}})^{k-1}}{(2k-1)y^{2k-2}\mathbbm{y}^{k-1}\cosh(d(z,\mathfrak{z}))^{2k-1}}{_2F_1}\!\left(k,k-\frac{1}{2};k+\frac{1}{2};\frac{1}{\cosh(d(z,\mathfrak{z}))^2}\right).
 	\end{equation}
From now on we choose $\mathfrak{z}=\tau_Q$. Hence, employing \eqref{eqn:NISTbeta}, \eqref{eqn:coshrat}, and \eqref{rewriteQ}, 
\eqref{jk1} becomes
 	\begin{align*}
 	R_{0}^{k-1}\!\left(g^{\mathbb{H}}_{k}\!\left(z,\tau_{Q}\right)\right)=
 	i(-1)^{k}2^{k-2} (k-1)! D^{\frac{1-k}{2}} y^{2-2k} \overline{Q(z,1)}^{k-1}\beta\!\left( -\frac{Dy^2}{\abs{Q(z,1)}^2}; k-\frac{1}{2},\frac{1}{2}\right).
 	\end{align*}
 	Since $Q$ is positive-definite, so that $\sgn(Q_z)=1$, this gives \eqref{n0}.
 	 	
 	To finish the proof, we apply raising $n$ times to \eqref{n0}, yielding
 	\begin{equation}\label{eqn:raiseseeds}
 	R_{2-2k}^n\left(\mathbb{P}_{1-k,-D,Q}(z)\right)=-\frac{1}{2^{k-1}(k-1)!}R_{2-2k}^n\left(y^{2k-2}\overline{R_0^{k-1}\left(g_k^{\H}(z,\tau_Q)\right)}\right).
 	\end{equation}
 	We now distinguish among two cases depending on whether $n\geq k-1$ or $n\leq k-1$.

In the case $n\geq k-1$, the claim follows from applying Lemma \ref{lem:raiserepeat} with $\ell=k-1$ to \eqref{eqn:raiseseeds} and noting that $g_k^{\H}$ is real-valued. 	
This yields the claim.

For  $n\leq k-1$ the eigenvalue of $R_{0}^{k-1-n}(g_k^{\H})$ is
 	$ (n+1)(n+2-2k).$
 	Thus we have, by Lemma \ref{lem:raiserepeat} with $\ell=n$,
 	\begin{align*}
 	R_{2-2k}^n\!\left(y^{2k-2}\overline{R_{2k-2-2n}^n\left(R_{0}^{k-1-n}\!\left(g_k^{\H}\!\left(z,
\tau_Q
\right)\right)\right)}\right)	
=  y^{2k-2-2n}\frac{n!(2k-2)!}{(2k-2-n)!} \overline{R_0^{k-1-n}\!\left(g_k^{\H}\!\left(z,
\tau_Q
\right)\right)}.
 	\end{align*}
 	Plugging this back into \eqref{eqn:raiseseeds} yields the claim.
 \end{proof}
 We also need regularized versions of $G_k$ and $\mathcal F_{\mathcal{A}}$. For this, define
\[
 \sideset{}{^{\operatorname{reg}}}\sum\limits_{w\in S} h(w):=\sum_{\substack{w\in S\\ h(w)\neq \infty}} h(w),
\]
 where $h$ is an arbitrary function taking inputs from some set $S$ and with outputs in $\C\cup \{\infty\}$. Note that different choices of $w$ lead to different subsets of $S$ being excluded on the right-hand side of the above equation. For any operator $\mathcal O$ we then let
 $$
 \mathcal O\left(\sideset{}{^{\operatorname{reg}}}\sum\limits_{w\in S} h(w)\right):=\sideset{}{^{\operatorname{reg}}}\sum\limits_{w\in S} \mathcal O(h(w)).
 $$
Moreover, for $\mathcal{H}(z):=\sum_{w\in S} h_z(w)$ we set
\[
\mathcal{H}^{\operatorname{reg}}(z):=\sideset{}{^{\operatorname{reg}}}{\sum}_{w\in S} h_z(w).
\]
  Note that $\mathcal{H}$ may possibly have distinct presentations of this type, written both as a sum over $w\in S$ of $h_z(w)$ and as sum over another set with a different function. The regularization $\mathcal{H}^{\operatorname{reg}}(z)$ depends on the choice of its presentation so we emphasize that the regularization uses the choice of $h_z$ and the set $S$ given in the definition of $\mathcal{H}$.

  We obtain the following corollary by applying Lemma \ref{lem:diffopseed} termwise. 

 \begin{corollary}\label{diffop}
 For $Q_0\in\mathcal{A}\in\mathcal{Q}_{-D}/\SL_2(\Z)$ and $n\in\N_0$ we have
 	\[
 	R_{2-2k}^n\!\left(\mathcal F_{\mathcal{A}}^{\operatorname{reg}}(z)\right) = \frac{a_{k,n}}{2\omega_{\tau_{Q_0}}} \begin{cases} R_0^{n+1-k}\!\left(G_k^{\operatorname{reg}}(z,\tau_{Q_0})\right) & \text{ if }n\geq k-1,\\ y^{2k-2-2n} \overline{R_{0}^{k-1-n}\!\left(G_k^{\operatorname{reg}}(z,\tau_{Q_0})\right)} & \text{ if }
n\leq  k-1,
 	\end{cases}
 	\]
where $a_{k,n}$ is defined in \eqref{eqn:akndef}.
In particular 
\[
\mathcal{F}^{\mathrm{reg}}_{\mathcal{A}}(z)=-\frac1{2^k
(k-1)!\,\omega_{\tau_{Q_0}}}y^{2k-2}\overline{
R_0^{k-1}\left(G_k^{\mathrm{reg}}\left(z, \tau_Q\right)\right)}.
\]
 \end{corollary}

 \begin{proof}
 	Writing $Q=Q_0\circ M$ with $M\in \Gamma_{\tau_{Q_0}}\backslash\SL_2(\Z)$, we have
 	\[
 	R_{2-2k}^n\!\left(\mathcal F_{\mathcal{A}}^{\operatorname{reg}}\right)=\frac{1}{2\omega_{Q_0}}\ \ \  \sideset{}{^{\operatorname{reg}}}\sum_{M\in\SL_2(\Z)} R_{2-2k}^{n}\!\left(\mathbb{P}_{1-k,-D,Q_0\circ M}\right).
 	\]
 	The result then follows from Lemma \ref{lem:diffopseed}, using the relation $\tau_{Q_0\circ M} =M^{-1} \tau_{Q_0}$.
 \end{proof}

\subsection{Proof of Theorem \ref{thm:Gpolar} (2)}\label{sec:GQprop}

We now have the necessary pieces to show the modularity of $\mathcal{F}_{\mathcal{A}}$ and its relation to $f_{\mathcal{A}}$ under the differential operators.
\begin{proof}[Proof of Theorem \ref{thm:Gpolar} (2)]
We first show that
\begin{equation}\label{eqn:RkGreens}
	R_0^k\left(G_k\!\left(z,\tau_{Q_0}\right)\right)= -2^{k}(k-1)!\,\omega_{\tau_{Q_0}} f_{\mathcal{A}}(z).
	\end{equation}
For this, we plug $j=k$ into \eqref{greenraise}, which implies that $R_0^k\left(g_k^{\H}(z,\mathfrak{z})\right)$ equals
	\begin{align}
	-\frac{(k-1)!(\overline{z}-\mathfrak{z})^k(\overline{z}-\overline{\mathfrak{z}})^{k}}{2y^{2k}\mathbbm{y}^k\cosh(d(z,\mathfrak{z}))^{2k}} {_2F_1}\left(k+\frac{1}{2},k;k+\frac{1}{2};\frac{1}{\cosh(d(z,\mathfrak{z}))^2}\right). \label{jk}
	\end{align}
	Now, by \eqref{2F1sym} and \eqref{2F1 special}, \eqref{jk} becomes
	\begin{align*}
	-\frac{(k-1)!(\overline{z}-\mathfrak{z})^k(\overline{z}-\overline{\mathfrak{z}})^k}{2y^{2k}\mathbbm{y}^k(\cosh(d(z,\mathfrak{z}))^2-1)^k}.
	\end{align*}
		Taking $\mathfrak{z}=\tau_Q$, using \eqref{eqn:coshrat} and \eqref{eqn:yQval}, and plugging in termwise, gives \eqref{eqn:RkGreens}.

	The statement for the $\xi$-operator now follows from Corollary \ref{diffop}, using \eqref{XiR} and \eqref{eqn:RkGreens}.
	
	We next compute the image of $\mathcal F_{\mathcal A}$ under $\mathcal{D}^{2k-1}$.  Using Bol's identity \eqref{Bol} we have, by Corollary \ref{diffop},  the equality
	\begin{align*}
	\mathcal{D}^{2k-1}\!\left(\mathcal F_{\mathcal{A}}(z)\right)=-\frac{1}{(4\pi)^{2k-1}}	\frac{a_{k,2k-1}}{2\omega_{\tau_{Q_0}}}R_{0}^k\left(G_k\left(z,\tau_{Q_0}\right)\right).
	\end{align*}
	Using \eqref{eqn:RkGreens} then gives 
the claim.

Finally, Corollary \ref{diffop} immediately implies that $\mathcal{F}_{\mathcal{A}}$ is modular of weight $2-2k$, and the decomposition $\Delta_{2-2k}=-\xi_{2k}\circ \xi_{2-2k}$ together with \eqref{eqn:xiDG} give that $\mathcal{F}_{\mathcal{A}}$ is annihilated by that operator.  From this one concludes that $\mathcal{F}_{\mathcal{A}}$ is a polar harmonic Maass form.
\end{proof}

\subsection{Elliptic expansion of $\mathcal F_{\mathcal{A}}$}

Before stating the elliptic expansion of $\mathcal{F}_{\mathcal{A}}$, we first give a general formula for functions annihilated by the Laplace operator.
\begin{lemma}\label{lem:F+ellexpraise}
Suppose that $\mathcal{M}:\H\to\C$ satisfies $\Delta_{2-2k}(\mathcal M)=0$ and has a singularity of finite order at $\varrho\in\mathbb H$. Then for $0\leq r_{\varrho}(z)\ll_{\varrho} 1$ we have
	\begin{equation}\label{id1}
	\left(\mathcal{M}_{\varrho}^+-\ppart_{\mathcal{M},\varrho}^+\right)(z) = (2i\eta)^{2-2k} \left(z-\overline{\varrho}\right)^{2k-2}\sum_{n\geq 0} \frac{\eta^n}{n!} R_{2-2k}^{n}\left(\mathcal{M}-\ppart_{\mathcal{M},\varrho}\right)(\varrho) X_{\varrho}(z)^n.
	\end{equation}
	In particular, if $\mathcal{M}$ does not have a singularity at $\varrho$, then
	for $0\leq r_{\varrho}(z)\ll_{\varrho} 1$
	we have
	\begin{equation}\label{id2}
	\mathcal{M}_{\varrho}^+(z) = (2i\eta)^{2-2k} \left(z-\overline{\varrho}\right)^{2k-2}\sum_{n\geq 0} \frac{\eta^n}{n!} R_{2-2k}^{n}\left(\mathcal{M}(\varrho)\right) X_{\varrho}(z)^n.
	\end{equation}
	\end{lemma}
\begin{remarks}
\noindent

\noindent
\begin{enumerate}[leftmargin=*]
\item 
In \eqref{id1}, one first needs to act on an independent variable and then plug in $\varrho$, because 
$\mathcal{M}-\ppart_{\mathcal{M},\varrho}$ depends on $\varrho$.  In \eqref{id2}, one may directly apply raising.
\item
	A similar statement is true for the non-meromorphic part $\mathcal{M}_{\varrho}^-$, where raising is instead applied to the conjugate of $\mathcal{M}$.  However, we do not work out the details here.
\end{enumerate}
\end{remarks}
\begin{proof}[Proof of Lemma \ref{lem:F+ellexpraise}]
	Since $\mathcal{M}_{\varrho}^+-\ppart_{\mathcal{M},\varrho}^+$ is holomorphic in some region around $\varrho$, 
Lemma \ref{lem:ellexpraise} provides, for $z$ in some neighborhood of $\varrho$, the expansion
\begin{equation*}
\left(\mathcal{M}_{\varrho}^+-\ppart_{\mathcal{M},\varrho}^+\right)(z)=(2i\eta)^{2-2k} \left(z-\overline{\varrho}\right)^{2k-2}\sum_{n\geq 0} \frac{	\eta^n}{n!} R_{2-2k}^{n}\left(\mathcal{M}_{\varrho}^+-\ppart_{\mathcal{M},\varrho}^+\right)(\varrho) X_{\varrho}(z)^n.
\end{equation*}
The claim hence follows once we prove that
	\[
	R_{2-2k}^n\left(\mathcal{M}_{\varrho}^+-\ppart_{\mathcal{M},\varrho}^+\right)(\varrho)=R_{2-2k}^n\left(\mathcal{M}-\ppart_{\mathcal{M},\varrho}\right)(\varrho).
\]
Noting which terms in the expansion \eqref{eqn:expw} grow as $z\to\varrho$, it suffices to show that for all $m\in \N$ and $n\in\N_0$ 
one has 
	\begin{equation}\label{eqn:raisevanish}
	\left[R_{2-2k,z}^n\!\left((z-\overline{\varrho})^{2k-2}\beta_0\!\left(1-r_{\varrho}(z)^2;2k-1,m\right)X_{\varrho}(z)^{-m}\right)\right]_{z=\varrho}=0.
	\end{equation}
	To prove \eqref{eqn:raisevanish}, we first use \eqref{betid} to rewrite
	\begin{multline}\label{eqn:toraise}
	(z-\overline{\varrho})^{2k-2}\beta_0\!\left(1-r_{\varrho}(z)^2;2k-1,m\right)X_{\varrho}(z)^{-m} \\
	=\!\!\!\! \sum_{0\leq j\leq 2k-2} \binom{2k-2}{j} \frac{(-1)^{j+1}}{j+m} (z-\overline{\varrho})^{2k-2} X_{\varrho}(z)^j \overline{X_{\varrho}(z)^{j+m}}.
	\end{multline}
	We next apply  $R_{2-2k,z}^{n}$ to \eqref{eqn:toraise}. All of the factors other than $(z-\overline{\varrho})^{2k-2} X_{\varrho}(z)^j$ are annihilated by differentiation in $z$.  Note moreover that $\overline{X_{\varrho}(z)^{j+m}}$ vanishes at $z=\varrho$, and 
also that the limit of $R_{2-2k,z}^{n}((z-\overline{\varrho})^{2k-2}X_{\varrho}(z)^j)$ as $z\to \varrho$ exists because $j\geq 0$ and the resulting function is a polynomial (of degree at most $2k-2$) in $z$ with coefficients depending on $y$ and $\varrho$.  Therefore \eqref{eqn:raisevanish} follows.
\end{proof}

We next describe the principal part of the elliptic expansion of $\mathcal F_{\mathcal{A}}$ around $\varrho$, and relate the coefficients of its expansion to higher Green's functions.

\begin{lemma}\label{ellipticF}
The principal part of $\mathcal F_{\mathcal{A}}$ around $\varrho\in\H$ is  
\[
\ppart_{\mathcal F_{\mathcal{A}},\varrho}(z)=\delta_{\varrho=\tau_Q}\mathbb{P}_{1-k,-D,Q}(z),
\]
where here by $\delta_{\varrho=\tau_Q}$ we mean that $\varrho=\tau_Q$ as points  in $\H$ instead of $\SL_2(\Z)\backslash\H$ as used throughout the paper.
The elliptic coefficients of the meromorphic part of $\mathcal F_{\mathcal{A}}$ are given by 
\[
c_{\mathcal F_{\mathcal{A}},\varrho}^{+}(n)=\frac{b_{k,n}}{\omega_{\tau_{Q_0}} }
\begin{cases}
\eta^{2-2k+n}R_{0,\varrho}^{n+1-k}\!\left(G_k^{\operatorname{reg}}\!\left(\varrho,\tau_{Q_0}\right)\right)&\text{if }n\geq k-1,\\[0.1cm]
\eta^{-n} \overline{R_{0,\varrho}^{k-1-n}\!\left(G_k^{\operatorname{reg}}\!\left(\varrho,\tau_{Q_0}\right)\right)} &\text{if }n\leq k-1,
\end{cases}
\]
where $b_{k,n}$ is defined in \eqref{eqn:bkndef}.
\end{lemma}

\begin{proof}
	First note that since $0<D\leq Q_z^2$ with $D=Q_z^2$ if and only 
if $Q(z,1)=0$ by
 \eqref{eqn:Qrewrite}, and since the only possible singularities of $\beta(x;a,b)$ are at $x=0$, $x=1$, and $x\to\infty$, the only terms contributing singularities are those with $\varrho=\tau_Q$.  Since $Q$ is entirely determined by $\tau_Q$ and $D$, it remains to show that $\mathbb{P}_{1-k,-D,Q}$ is precisely a principal part (i.e., that its elliptic expansion \eqref{eqn:expw} only contains terms that grow towards $\varrho$). The claim hence follows once we show that
	\begin{equation}\label{eqn:BetaRels}
	\mathbb{P}_{1-k,-D,Q}(z)=2^{2-3k} \!\left(z-\overline{\tau_{Q}}\right)^{2k-2} v_Q^{1-k} \beta_0\left(1-r_{\tau_{Q}}(z)^2; 2k-1,1-k\right) X_{\tau_{Q}}(z)^{k-1}.
	\end{equation}
To obtain \eqref{eqn:BetaRels}, note that it is not hard to see that the constant $\mathcal C_{2k-1,1-k}$ defined in \eqref{eqn:beta0def} 
vanishes,  and thus
\[
\beta_0(Z;2k-1,1-k)=\beta(Z;2k-1,1-k).
\]
By \eqref{rewriteQ}, the right-hand side of \eqref{eqn:BetaRels} thus equals 
\[
2^{1-2k} D^{\frac{1-k}{2}} Q(z,1)^{k-1} \beta\left(1-r_{\tau_{Q}}(z)^2; 2k-1,1-k\right).
\]
We then use \eqref{betatran}, noting that \eqref{1r} and \eqref{eqn:Qrewrite} imply that 
\begin{equation*}
-\frac{\left(1-r_{\tau_Q}(z)^2\right)^2}{4r_{\tau_Q}(z)^2}= -\frac{Dy^2}{|Q(z,1)|^2},
\end{equation*}
to obtain
\[
2^{-1}i(-1)^k D^{\frac{1-k}{2}}Q(z,1)^{k-1}  \beta\!\left(\frac{-Dy^2}{|Q(z,1)|^2}; k-\frac{1}{2},\frac{1}{2}\right).
	\]
Recalling the definition of $\mathbb{P}_{1-k,-D,Q}$ in  \eqref{defineP}, this yields the statement for the principal part.
%
	
	We next evaluate the elliptic coefficients of the meromorphic part. For $n\in\N_0$, 
Lemma \ref{lem:F+ellexpraise} allows us
 to rewrite
	\begin{equation*}
	c_{\mathcal F_\mathcal{A},\varrho}^+(n)= \frac{ (2i)^{2-2k}}{n!} \eta^{2-2k+n} R_{2-2k}^{n}\left(\mathcal F_{\mathcal{A}}-\ppart_{\mathcal F_{\mathcal{A}},\varrho}\right)(\varrho).
	\end{equation*}	
Using \eqref{eqn:BetaRels} and acting termwise yields
	\begin{equation*}
	c_{\mathcal F_\mathcal{A},\varrho}^+(n)= \frac{ (2i)^{2-2k}}{n!}\eta^{2-2k+n}
  R_{2-2k}^{n}\left(\mathcal F_{\mathcal{A}}^{\operatorname{reg}}(\varrho)\right).
	\end{equation*}
	The result then follows from Corollary \ref{diffop}. 	
\end{proof}

\section{Proof of Theorem \ref{generalint} and Corollary \ref{cor:Greensinner}}\label{sec:residue}

 In this section we prove a more general version of Theorem \ref{generalint} in Theorem \ref{thm:innerGreensGeneral} below, and then use this to prove Corollary \ref{cor:Greensinner}. In order to do so, we first rewrite the inner product $\langle f,f_{\mathcal A}\rangle$ in terms of the elliptic coefficients of $f$ given in \eqref{eqn:fEllExp}, as well as those of $\mathcal F_{\mathcal{A}}$, evaluated explicitly in Lemma \ref{ellipticF}.

\begin{theorem}\label{thm:wnotz}
If $f\in \mathbb{S}_{2k}$ has its poles at $\mathfrak{z}_{1},\dots, \mathfrak{z}_r$ in $\SL_2(\Z)\backslash\H$, then
$$
\left<f,f_{\mathcal{A}}\right>=\pi \sum_{\ell=1}^{r} \frac{1}{\mathbbm{y}_\ell \omega_{\mathfrak{z}_\ell}}
 \sum_{n\geq 1} c_{f,\mathfrak{z}_\ell}(-n)c_{\mathcal{F}_{\mathcal{A}},\mathfrak{z}_\ell}^+(n-1).
$$
\end{theorem}
\begin{proof} Since the functions $\left\{ \Psi_{2k,m}(\cdot,\mathfrak{z}): \mathfrak{z}\in\H,\ m\in\Z\right\}$ span $\SS_{2k}$, linearity allows us to assume that $f(z)=(2\omega_{\mathfrak{z}})^{-1}\Psi_{2k,m}(z,\mathfrak{z})$ for $m\in\Z$, $\mathfrak{z}\in\H$.  We use a trick employed by many authors (cf. \cite{Bo1,BruinierFunke,DITRQ}) for rewriting the inner product.  By \eqref{eqn:xiDG} we obtain 
\begin{equation}\label{regist}
\left<f,f_{\mathcal{A}}\right>=\left<f, \xi_{2-2k}\left(\mathcal F_{\mathcal{A}}\right)\right>.
\end{equation}
We take the implied integral in \eqref{regist} over the cut-off fundamental domain  $ F_T^*$, consisting of those $z\in F^*$ for which $z$ is equivalent to a point in $ F_T$ under the action of $\SL_2(\Z)$, and then let $T\to \infty$.  We require a few additional properties of $ F^*$.  
First we may assume, without loss of generality, that $\tau_{Q_0},\mathfrak{z}\in  F^*$. We also claim that since there are no poles of $f$ or $f_{\mathcal{A}}$ for $y\gg 0$, $ F^*$ may be constructed so that for $T\gg 0$, the boundary of $ F_T^*$ includes the line from $-\frac12+iT$ to $\frac12+iT$. Indeed, one can explicitly build $ F^*$ from the standard fundamental domain $ F$ by successively removing partial balls $\mathcal{B}_{\delta}(\mathfrak{z}_\ell)\cap  F$ around each pole $\mathfrak{z}_\ell\in \partial  F$ that is not an elliptic fixed point and moving them to the other side of the fundamental domain with respect to the imaginary axis to combine with other partial balls around equivalent points $\gamma\mathfrak{z}_\ell\in \partial  F$ to form entire balls $\mathcal{B}_{\delta}(\gamma\mathfrak{z}_\ell)$ for some $\gamma\in\SL_2(\Z)$.  Since the part of the fundamental domain with $y\gg 0$ remains unchanged, the boundary of $ F_T^*$ is as desired.  Moreover, we may choose $\delta>0$ sufficiently small such that $\mathcal{B}_{\delta}(\mathfrak{z}_{\ell})$ is contained inside $\Gamma_{\mathfrak{z}_{\ell}}  F^*$ and balls around different points are disjoint.  By Stokes' Theorem, using the meromorphicity of $f$ and the vanishing of $f(z)h_{s,\varrho}(z)$ at $z=\varrho$ for $s\in\C$ with $\sigma\gg 0$,  \eqref{regist} equals 
\begin{multline}\label{eqn:polez}
-\operatorname{CT}_{s=0}\Bigg(\int_{ F^*} f(z)\overline{
\xi_{0}
\left(h_{s_1,\mathfrak{z}}(z)h_{s_2,\tau_{Q_0}}(z)\right)} \mathcal F_{\mathcal A}(z)
dxdy
\\
+\lim_{T\to\infty}\int_{\partial  F_T^*} f(z) h_{s_1,\mathfrak{z}}(z)h_{s_2,\tau_{Q_0}}(z)\mathcal F_{\mathcal A}(z)dz\Bigg).
\end{multline}
Note that the minus sign occurring in the second term in \eqref{eqn:polez} comes from the computation of the exterior derivative in terms of the $\xi$-operator; see the last two formulas on page 12 of \cite{BruinierFunke} for further details.

Recalling the definition after \eqref{eqn:OurReg}, we note that $z\mapsto h_{s_0,\varrho}(z)$ is invariant under $\Gamma$, and hence the integrand in the second term of \eqref{eqn:polez} is modular of weight $2$. Combining this modularity with the exponential decay of $f$ towards $i\infty$ and the polynomial growth of the other factors, one concludes that the second term vanishes as $T\to\infty$.  Using the invariance of the integrand under the action of $\Gamma_{\tau_{Q_0}}$ and $\Gamma_{\mathfrak{z}}$, we then rewrite \eqref{eqn:polez} as
\begin{equation}\label{eqn:polez5}
-\frac{1}{\omega_{\tau_{Q_0}}\omega_{\mathfrak{z}}}\operatorname{CT}_{s=0}\left(\sum_{\gamma_1\in\Gamma_{\mathfrak{z}}}\sum_{\gamma_2\in\Gamma_{\tau_{Q_0}}}\int_{\gamma_1\gamma_2 F^*} f(z)\overline{\xi_{0}\left(h_{s_1,\mathfrak{z}}(z)h_{s_2,\tau_{Q_0}}(z)\right)} \mathcal F_{\mathcal A}(z)dxdy\right).
\end{equation}
Note that for $\varrho\in  F^*$, no other  element of $\Gamma_{\varrho} F^*$ is equivalent to $\varrho$ 
modulo $\SL_2(\Z)$, and 
we have the equality $r_{\varrho}(Mz)=r_{\varrho}(z)$ for every $M\in \Gamma_{\varrho}$ by \eqref{eqn:requal}.  Hence the equality $h_{s_j,\varrho}(z)=r_{\varrho}(z)^{2s_j}$ holds for every $z\in \Gamma_{\varrho} F^*$.
For $(\varrho,s_0)\in\{(\mathfrak{z},s_1),(\tau_{Q_0},s_2)\}$ we may therefore compute 
\begin{equation}\label{eqn:xih}
\overline{\xi_{0}\left(h_{s_0,\varrho}(z)\right)}=-4s_0\eta r_{\varrho}(z)^{2s_0-2} \frac{X_{\varrho}(z)}{\left(\overline{z}-\varrho\right)^2}.
\end{equation}

Thus $1/(s_1s_2)$ times the first integral in \eqref{eqn:polez} restricted to $z\notin \mathcal{B}_{\delta}(\mathfrak{z})\cup \mathcal{B}_{\delta}(\tau_{Q_0})$ converges absolutely and locally uniformly in $s$, and hence the corresponding contribution to the integral is analytic and vanishes at $s=0$.  To evaluate the remaining part of  \eqref{eqn:polez5}, we first compute $\xi_{0}(h_{s_1,\mathfrak{z}}(z)h_{s_2,\tau_{Q_0}}(z))$ for $z\in \mathcal{B}_{\delta}(\mathfrak{z})\cap  F^*$ using the product rule.  The term coming from differentiating $h_{s_2,\tau_{Q_0}}$ vanishes in the limit $s_2\to 0$ by \eqref{eqn:xih}.  We then use \eqref{eqn:Ballsplit} and \eqref{eqn:polez5} to show that the first integral in \eqref{eqn:polez} over $\mathcal{B}_{\delta}(\mathfrak{z})\cap  F^*$ equals
$$
-\frac{1}{\omega_{\mathfrak{z}}}\operatorname{CT}_{s_1=0}\left(\int_{\mathcal{B}_\delta(\mathfrak{z})} f(z)\overline{\xi_{0,z}\left(h_{s_1,\mathfrak{z}}(z)\right)} \mathcal F_{\mathcal A}(z) dxdy\right).
$$

Plugging \eqref{eqn:xih} into the latter expression, and repeating the argument for $\tau_{Q_0}$ if $\tau_{Q_0}\neq \mathfrak{z}$, \eqref{eqn:polez} becomes
\begin{equation}
\label{eqn:polez2}
\frac{4v_{Q_0}}{\omega_{\tau_{Q_0}}}\mathcal{J}\!\left(\tau_{Q_0}\right)+\delta_{\mathfrak{z}\neq \tau_{Q_0}}\frac{4\mathbbm{y}}{\omega_{\mathfrak{z}}}\mathcal{J}(\mathfrak{z}), 
\end{equation}
where
\begin{equation}
\label{eqn:inttoeval}
\mathcal{J}(\varrho):=
\operatorname{CT}_{s_0=0}\left(s_0 \int_{\mathcal{B}_{\delta}(\varrho)} f(z)r_{\varrho}(z)^{2s_0-2}\frac{X_{\varrho}(z)}{\left(\overline{z}-\varrho\right)^2} \mathcal F_{\mathcal A}(z) dxdy
\right).
\end{equation} 

To evaluate $\mathcal{J}(\varrho)$, we insert the elliptic expansion \eqref{eqn:Psiexp} of $f(z)=(2\omega_{\mathfrak{z}})^{-1}\Psi_{2k,m}(z,\mathfrak{z})$ around $\varrho$ and the expansion of $\mathcal F_{\mathcal A}$ using the explicit principal part given in Lemma \ref{ellipticF} (rewritten as 
in \eqref{eqn:BetaRels}) to
 see that the integral in \eqref{eqn:inttoeval} equals
\begin{multline}\label{eqn:intb4change}
\frac{1}{\eta^2}\int_{\mathcal{B}_{\delta}(\varrho)}\frac{\eta^2}{\left|z-\overline{\varrho}\right|^4}\left(\delta_{\varrho=\mathfrak{z}} X_{\varrho}(z)^m +\sum_{n\geq 0}  c(n) X_{\varrho}(z)^n\right)r_{\varrho}(z)^{2s_0-2}X_{\varrho}(z)\\
\times \Bigg( 2^{2-3k}v_{Q_0}^{1-k}\delta_{\varrho=\tau_{Q_0}}\beta_0\left(1-r_{\tau_{Q_0}}(z)^2;2k-1,1-k\right) X_{\tau_{Q_0}}(z)^{k-1} +\sum_{\ell\geq 0}  c_{\mathcal{F}_{\mathcal{A}},\varrho}^+(\ell) X_{\varrho}(z)^{\ell}\\
 + \sum_{\ell<0} c_{\mathcal{F}_{\mathcal{A}},\varrho}^-(\ell) \beta_0\left(1-r_{\varrho}(z)^2;2k-1,-\ell\right)X_{\varrho}(z)^{\ell} \Bigg)dxdy.
\end{multline}
Making the change of variables $X_{\varrho}(z)=Re^{i\theta}$ and noting that $\frac{\eta^2}{\left|z-\overline{\varrho}\right|^{4}} dxdy = \frac{R}{4} d\theta dR$
and $r_{\varrho}(z)=R$,
we may rewrite \eqref{eqn:intb4change} as
\begin{multline*}
\frac{1}{4\eta^2}\int_{0}^{\delta}\int_{0}^{2\pi}\left( \delta_{\varrho=\mathfrak{z}}R^m e^{im\theta} +\sum_{n\geq 0} c(n) R^n e^{in\theta}\right)
\Bigg(\delta_{\varrho=\tau_{Q_0}}\frac{ R^{k-1+2s_0}e^{ik\theta}}{2^{3k-2}v_{Q_0}^{k-1}}\beta_0\left(1-R^2;2k-1,1-k\right) \\
+\sum_{\ell\geq 0} c_{\mathcal{F}_{\mathcal{A}},\varrho}^+(\ell) R^{\ell+2s_0}e^{i(\ell+1)\theta}+ \sum_{\ell<0} c_{\mathcal{F}_{\mathcal{A}},\varrho}^-(\ell)\beta_0\left(1-R^2;2k-1,-\ell\right)R^{\ell+2s_0}e^{i(\ell+1) \theta}  \Bigg) d\theta dR.
 \end{multline*}
Expanding, the integral over $\theta$ vanishes unless the power of $e^{i\theta}$ is zero.
The latter expression thus equals
\begin{multline}\label{eqn:polez3}
\frac{\pi }{2\eta^2} \int_{0}^{\delta} \bigg(\frac{\delta_{m=-k}\delta_{\varrho=\tau_{Q}=\mathfrak{z}}}{2^{3k-2}v_{Q_0}^{k-1}}\beta_0\left(1-R^2;2k-1,1-k\right) +  \delta_{\varrho=\mathfrak{z}} c_{\mathcal{F}_{\mathcal{A}},\varrho}^+
\left(-m-1\right)\\
+\sum_{n\geq 0} \left(c(n)+\delta_{n=m}\delta_{\varrho=\mathfrak{z}}\right)
c_{\mathcal{F}_{\mathcal{A}},\varrho}^-
\left(-n-1\right) \beta_0\left(1-R^2;2k-1,n+1\right) \bigg)R^{2s_0-1} dR.
 \end{multline}
\noindent
To determine the residue of \eqref{eqn:polez3} at $s_0=0$, we use \eqref{betid} with $a=2k-1$ and $b=-\ell$ to expand $\beta_0(1-R^2;2k-1,-\ell)$. For $\sigma_0\gg 0$, multiplying the first term in \eqref{betid} by $R^{2s_0-1}$ and integrating then yields
$$
\sum_{\substack{0\leq j\leq 2k-2\\ j\neq \ell}} \binom{2k-2}{j}\frac{(-1)^{j+1}}{j-\ell}\int_{0}^{\delta}R^{2(j-\ell+s_0)-1} dR = \sum_{\substack{0\leq j\leq 2k-2\\ j\neq \ell}} \binom{2k-2}{j}\frac{(-1)^{j+1} \delta^{2(j-\ell+s_0)}}{2(j-\ell)(j-\ell+s_0)},
$$
which is holomorphic at $s_0=0$, so the corresponding terms in \eqref{eqn:polez3} give no residue.  Hence

 \begin{equation}\label{eqn:polez4}
 \mathcal{J}(\varrho)=\frac{\pi  \delta_{\varrho=\mathfrak{z}}}{2\eta^2
}\operatorname{CT}_{s_0=0}\left( -s_0\frac{\delta_{m=-k}\delta_{\varrho=\tau_{Q_0}}}{(-8v_{Q_0})^{k-1}}\binom{2k-2}{k-1}\int_{0}^{\delta} \log(R)R^{2s_0-1}dR +
c_{\mathcal{F}_{\mathcal{A}},\varrho}^+
\left(-m-1\right)\frac{\delta^{2s_0}}{2}\right).
 \end{equation}
\noindent
Using integration by parts for the first summand 
in \eqref{eqn:polez4}, we
 obtain a meromorphic continuation with no constant term, as
$$
s_0\int_{0}^{\delta} \log\left(R\right)R^{2s_0-1}dR = \frac{\delta^{2s_0}}{2}\log(\delta) -\frac{1}{4s_0} \delta^{2s_0}
=-\frac{1}{4s_0} + O(s_0).
$$
Therefore 
\[
\mathcal{J}(\varrho)=\frac{\pi \delta_{\varrho=\mathfrak{z}}}{4\eta^2}c_{\mathcal{F}_{\mathcal{A}},\varrho}(-m-1).
\]
Plugging 
this 
back into \eqref{eqn:polez2} and recalling that this equals \eqref{regist} then gives 
\begin{align*}
\langle f, f_{\mathcal{A}}\rangle & =\left\langle f, \xi_{2-2k}\left(\mathcal{F}_{\mathcal{A}}\right)\right\rangle
=\frac{4v_{Q_0}}{\omega_{\tau_{Q_0}}}\mathcal{J}\left(\tau_{Q_0}\right)+\frac{4\mathbbm{y}\delta_{\mathfrak{z}\neq \tau_{Q_0}}}{\omega_{\mathfrak{z}}}\mathcal{J}(\mathfrak{z})\\
&\ =\frac{\pi}{v_{Q_0}\omega_{\tau_{Q_0}}}\delta_{\mathfrak{z}=\tau_{Q_0}}c^+_{\mathcal{F}_{\mathcal{A}, \tau_{Q_0}}}(-m-1)
+\frac{\pi}{\mathbbm{y}\omega_{\mathfrak{z}}}\delta_{\mathfrak{z}\neq\tau_{Q_0}}c^+_{\mathcal{F}_{\mathcal{A}, \mathfrak{z}}}(-m-1)=\frac{\pi}{\mathbbm{y}\omega_{\mathfrak{z}}}
c^+_{\mathcal{F}_{\mathcal{A}, \mathfrak{z}}}(-m-1).
\end{align*}
\end{proof}

The following Theorem generalizes Theorem \ref{generalint} to also allow poles at $\tau_{Q_0}$.
\begin{theorem}\label{thm:innerGreensGeneral}
If $Q_0\in \mathcal{A}\in\mathcal{Q}_{-D}\backslash \SL_2(\Z)$ and $f\in \mathbb{S}_{2k}$ with poles in $\SL_2(\Z)\backslash \H$ at $\mathfrak{z}_1,\dots,\mathfrak{z}_r$, then
\begin{multline*}
\left<f,f_{\mathcal{A}}\right>=\frac{\pi}{\omega_{\tau_{Q_0}}}
 \sum_{\ell=1}^r \frac1{ \omega_{\mathfrak{z}_\ell}}
\Bigg(\sum_{n\geq k}b_{k,n-1} \mathbbm{y}_\ell^{-2k+n} c_{f,\mathfrak{z}_{\ell}}(-n)  R_{0}^{n-k}\left(G_k^{\operatorname{reg}}(z,\tau_{Q_0})\right)\\
+ \sum_{n=1}^{k-1} b_{k,n-1}  \mathbbm{y}_\ell^{-n} c_{f,\mathfrak{z}_{\ell}}(-n) \overline{R_0^{k-n}\left(G_k^{\operatorname{reg}}(z,\tau_{Q_0})\right)} \Bigg).
\end{multline*}
\end{theorem}
\begin{proof}
The result follows directly by plugging Lemma \ref{ellipticF} into the statement of Theorem \ref{thm:wnotz}.
\end{proof}

We finally prove Corollary \ref{cor:Greensinner}.
\begin{proof}[Proof of Corollary \ref{cor:Greensinner}]
This follows immediately from Theorem \ref{generalint} and Lemma \ref{lem:fPsi}.
\end{proof}
\section{Future questions}\label{sec:future}

\noindent
We conclude the paper by discussing some possible future directions that one could pursue:
\noindent
\begin{enumerate}[leftmargin=*]
\item
Note that by Theorem \ref{generalint}, $f_{\mathcal{A}}$ is orthogonal to cusp forms, which was also proven by Petersson \cite{Pe2}.
  Combining the regularizations for growth towards the cusps and towards points in $\H$, one can further prove that $f_{\mathcal{A}}$ is orthogonal to weakly holomorphic modular forms, but we do not carry out the details here. After reading a preliminary version of this paper, Zemel \cite{Zemel} considered some questions related to inner products between weakly holomorphic modular forms and meromorphic cusp forms.  
\item
Images of lifts between integral and half-integral weight weak Maass forms  have Fourier expansions that  can be written as CM-traces for negative discriminants and cycle integrals for positive discriminants  \cite{BO,BruO,DIT}. Thus the appearance of CM-values of $\SL_2(\Z)$-invariant functions in Theorem \ref{thm:innerGreensGeneral} is natural. Since the generating function of Zagier's cusp forms for positive discriminants yields the (holomorphic) kernel function for the first Shintani lift, one may ask whether there is a connection between CM-traces and the generating function of the $f_{k,-D}$.  However, the naive generating function diverges, and furthermore, it would have a dense set of poles in the upper half-plane.  It hence might be interesting to investigate instead whether the generating function for the regularized function $f_{k,-D}^{\operatorname{reg}}$ has any connection to CM-traces.
\item In light of the connection in Corollary \ref{cor:Greensinner}, it would be interesting to investigate Conjecture 4.4 of \cite{GZ}, concerning $G_k$ evaluated at CM-points.  Moreover, since $G_k^{\operatorname{reg}}(\tau_{Q_0},\tau_{Q_0})$ naturally appears when computing $\left<f_{\mathcal{A}},f_{\mathcal{A}}\right>$, one can probably use the regularized higher Green's functions to reformulate the conjecture to include the case when the CM-points agree.  Given the connections to heights and geometry in \cite{GZ} and \cite{Zhang}, it would also be interesting to see if the identity in Corollary \ref{cor:Greensinner} holds for $k=1$ and higher level in this case.
\item In Conjecture 4.4 of \cite{GZ}, Gross and Zagier took linear combinations of $G_{k}$ acted on by 
Hecke operators, and
 conjectured that these linear combinations evaluated at CM-points are essentially logarithms of algebraic numbers whenever the linear combinations satisfy certain relations.  These relations are determined by linear equations defined by the Fourier coefficients of weight $2k$ cusp forms.  
Note that by Corollary \ref{diffop}, $G_{k}(z,\tau_Q)$ is essentially $R_0^{k-1}(\mathcal{F}_{\mathcal{A}}(z))$, while $\mathcal F_{\mathcal{A}}$ is naturally related to $f_{\mathcal{A}}$ via differential operators in Theorem \ref{thm:Gpolar} (2).  Translating the condition of Gross and Zagier into a condition on polar harmonic Maass forms might be enlightening in two directions.  On the one hand, it might carve out a natural subspace of weight $2k$ meromorphic modular forms (corresponding to 
the image under $\xi_{2-2k}$
 of those 
polar harmonic Maass forms 
 satisfying these conditions), which
 may satisfy other interesting properties.  On the other 
hand,
 by applying the theory of harmonic Maass forms, one may be able to loosen the conditions and investigate what happens for general linear combinations.
\end{enumerate}

\end{document}